\definecolor{webgreen}{rgb}{0,.5,0}
\definecolor{webbrown}{rgb}{.6,0,0}
\newcommand{\seqnum}[1]{\href{http://oeis.org/#1}{\underline{#1}}}
\newtheorem{theorem}{Theorem}[section]
\newtheorem{corollary}[theorem]{Corollary}
\newtheorem{proposition}[theorem]{Proposition}
\theoremstyle{definition}
\newtheorem{definition}[theorem]{Definition}
\newtheorem{example}[theorem]{Example}
\newtheorem{remark}[theorem]{Remark}
\newtheorem{conjecture}{Conjecture}[section]
\sloppy \setcounter{tocdepth}{1}
\begin{document}

%
%

\title{Classes of Gap Balancing Numbers}

\author{Jeremiah Bartz}
\address{University of North Dakota, Department of Mathematics, Witmer Hall Room 313, 101 Cornell Street Stop 8376, Grand Forks, ND 58202, USA}
\email{jeremiah.bartz@und.edu}

\author{Bruce Dearden}
\address{University of North Dakota, Department of Mathematics, Witmer Hall Room 313, 101 Cornell Street Stop 8376, Grand Forks, ND 58202, USA}
\email{bruce.dearden@und.edu}
     
\author{Joel Iiams}
\address{University of North Dakota, Department of Mathematics, Witmer Hall Room 313, 101 Cornell Street Stop 8376, Grand Forks, ND 58202, USA}
     \email{joel.iiams@und.edu}

\dedicatory{}
\thanks{}

\begin{abstract}
Gap balancing numbers are a certain generalization of balancing and cobalancing numbers that arise from studying the equation ${T(L)+T(B)=T(m)}$ where $T(i)$ is the $i$th triangular number. In this paper, we survey early results, attempt to unify existing terminology, and extend prior findings. In addition, we further investigate the structure of classes of gap balancing numbers and related sequences, presenting new results and a conjecture regarding the number of classes based on the gap size.
\end{abstract}

\let\thefootnote\relax\footnotetext{2010 {\it Mathematics Subject Classification}. Primary 11B39; Secondary 11B83.}
 \keywords{balancing numbers, Lucas-balancing numbers, Pell and associated Pell sequences, recurrence relations}

\maketitle



\section{Introduction}

Let $k\geq 0$ be an integer. A positive integer $B$ is called an upper $k$-gap balancing number with upper $k$-gap balancer $r$ if 
\begin{equation}\label{eq:1}
1+2+3+\dots+(B-k) =(B+1)+\dots +(B+r)
\end{equation}
for some integer $r\geq 0$. The upper $k$-gap balancing numbers are the balancing numbers (\seqnum{A001109}) and cobalancing numbers (\seqnum{A053141}) when $k=1$ and $k=0$, respectively. It follows from \eqref{eq:1} that $B$ is a upper $k$-gap balancing number if and only if  
\begin{equation}\label{eq:2}
T(B-k)+T(B)=T(B+r)
\end{equation} 
where $T(i)=\frac{i(i+1)}{2}$ is the $i$th triangular number. 

Interest in balancing numbers \cite{bal} and their generalizations \cite{tseq,tbal,K,L1,L2,circ,seq,cobal,gap,kgap,S} stems from contemporary investigations into the properties of square triangular numbers and related expressions. A central theme is studying a geometrically motivated sequences through solutions to associated Pell-like equations. In this paper, we study \eqref{eq:2} and its associated Pell-like equations to present several new results regarding the structure of classes of upper $k$-gap balancing numbers and related sequences. Along the way, we survey early results, attempt to unify existing terminology, and extend prior findings. We also give a conjecture regarding the number of classes of $k$-gap balancing numbers based on the gap size $k$. 

\section{Upper gap balancing and related numbers} 

In this section, we define upper $k$-gap balancing and related numbers and give some examples.

\begin{definition}\label{def:1}
Let $k\geq 0$ be an integer. Define a positive integer $B$ to be an upper $k$-gap balancing number if 
\begin{equation*}
1+2+3+\dots+(B-k) =(B+1)+\dots +(B+r)
\end{equation*}
for some integer $r\geq0$. We refer to $r$ as the upper $k$-gap balancer corresponding to the upper $k$-gap balancing number $B$.  
\end{definition}

It follows from \eqref{eq:1} that $B$ is an upper $k$-gap balancing number if and only if  
\begin{equation*}
T(B-k)+T(B)=T(B+r)
\end{equation*} 
where $T(i)=\frac{i(i+1)}{2}$ is the $i$th triangular number. Thus upper $k$-gap balancing numbers are the largest index of the triangular numbers occurring on the left side of \eqref{eq:2} and also the largest number of the $k$ consecutive numbers deleted to form the gap in \eqref{eq:1}.  
Solving \eqref{eq:2} for $r$ and $B$, respectively, gives
\begin{equation}\label{eq:3}
r=\frac{-(2B+1)+\sqrt{8B^2+8(1-k)B+(2k-1)^2}}{2}
\end{equation}
and
\begin{equation}\label{eq:4}
B= \frac{(2r+2k-1)+ \sqrt{8r^2+8k r+1}}{2} 
\end{equation}
where we take the positive square roots so that $r\geq 0$ and $B>0$. Thus $B$ is an upper $k$-gap balancing number with upper $k$-gap balancer $r$ if and only if ${8B^2+8(1-k)B+(2k-1)^2}$ is a perfect square if and only if $8r^2+8k r+1$ is a perfect square. Due to these latter expressions we make the following definitions.

\begin{definition}\label{def:2}
Let $B$ be an upper $k$-gap balancing number with upper $k$-gap balancer $r$. Define its upper $k$-gap Lucas-balancing number to be 
\begin{equation*}
C=\sqrt{8B^2+8(1-k)B+(2k-1)^2}
\end{equation*}
and its upper $k$-gap Lucas-balancer $\hat{r}$ to be 
\begin{equation*}
\hat{r}=\sqrt{8r^2+8k r+1}.
\end{equation*}
We refer to $(B,C)$ as an upper $k$-gap balancing pair and $(r,\hat{r})$ as its upper $k$-gap balancer pair.
\end{definition}

It follows from Definition \ref{def:2} that an upper $k$-gap balancing pair $(B,C)$ is a solution to the Pell-like equation 
\begin{equation}\label{eq:5}
y^2=8x^2+8(1-k)x+(2k-1)^2.
\end{equation}
Moreover, an upper $k$-gap balancer pair $(r,\hat{r})$ is a solution to
\begin{equation}\label{eq:6}
y^2=8x^2+8kx+1.
\end{equation}

\begin{remark}\label{rem:1}
Using the substitutions $z=2x+1-k$ and $z=2r+k$, respectively, \eqref{eq:5} and \eqref{eq:6} can be expressed as
\begin{equation}\label{eq:7}
y^2-2z^2=2k^2-1
\end{equation}
and
\begin{equation}\label{eq:8}
y^2-2z^2=-(2k^2-1).
\end{equation}
\end{remark}

We are also interested in the index of the triangular number appearing on the right side of \eqref{eq:2} and make the following definition. 

\begin{definition}\label{def:3}
The {\it counterbalancer} $m$ of an upper $k$-gap balancing number $B$ with upper $k$-gap balancer $r$ is defined to be $m=B+r$. 
\end{definition}

The next theorem collects several relationships which follow directly from equations \eqref{eq:3} and \eqref{eq:4} and the definitions given above.

\begin{theorem}\label{thm:1}
Suppose $(B,C)$ is an upper $k$-gap balancing pair with associated upper $k$-gap balancer pair $(r,\hat{r})$ and counterbalancer $m$. Then
\begin{enumerate}
\item[(a)] $r=\frac{-2B+C-1}{2}$;
\item[(b)] $\hat{r}=2B-2r+1-2k$;
\item[(c)] $\hat{r}=4B-C+2-2k$;
\item[(d)] $m=\frac{C-1}{2}$.
\end{enumerate}
\end{theorem}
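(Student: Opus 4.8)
The plan is to obtain all four identities by direct algebraic manipulation of \eqref{eq:3} and \eqref{eq:4}, feeding in the definitions of $C$, $\hat r$, and $m$; no Pell theory or recursion is needed for this particular statement.

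First I would record the key bookkeeping observation: the radicand $8B^2+8(1-k)B+(2k-1)^2$ under the square root in \eqref{eq:3} is, by Definition~\ref{def:2}, exactly $C^2$, and likewise the radicand $8r^2+8kr+1$ in \eqref{eq:4} is exactly $\hat r^2$. Since in \eqref{eq:3} and \eqref{eq:4} we take the positive square roots, those equations may be rewritten as $r=\frac{-(2B+1)+C}{2}$ and $B=\frac{(2r+2k-1)+\hat r}{2}$.

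Part (a) is then immediate: distributing the minus sign in $r=\frac{-(2B+1)+C}{2}$ gives $r=\frac{-2B+C-1}{2}$. For part (b), I would clear the denominator in the rewritten form of \eqref{eq:4} to get $2B=2r+2k-1+\hat r$ and solve for $\hat r$, obtaining $\hat r=2B-2r+1-2k$. Part (c) follows by eliminating $r$ between (a) and (b): from (a) we have $2r=C-2B-1$, and substituting into (b) yields $\hat r=2B-(C-2B-1)+1-2k=4B-C+2-2k$. Finally, part (d) uses Definition~\ref{def:3}, namely $m=B+r$, together with (a): $m=B+\frac{-2B+C-1}{2}=\frac{C-1}{2}$.

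The only thing requiring any care is the sign bookkeeping and the verification that the square roots in \eqref{eq:3} and \eqref{eq:4} are precisely the positive ones chosen so that $r\ge 0$, $B>0$, $C>0$, and $\hat r>0$; beyond that there is no genuine obstacle, which is consistent with the theorem being phrased as a collection of relationships that ``follow directly'' from the earlier equations and definitions.
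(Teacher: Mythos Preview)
Your proof is correct and is exactly the approach the paper intends: the paper does not write out a proof at all, merely stating that the relationships ``follow directly from equations \eqref{eq:3} and \eqref{eq:4} and the definitions given above,'' which is precisely the computation you carry out.
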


\begin{example}
The identity $T(11)+T(20)=T(23)$ shows that $(20,47)$ is an upper $9$-gap balancing pair with upper $9$-gap balancer pair $(3,17)$ and counterbalancer 23. The upper $0$-gap and $1$-gap balancing numbers are cobalancing \cite{cobal} and balancing numbers \cite{bal}, respectively. In addition, the upper $1$-gap balancing numbers are the sequence balancing numbers \cite{seq} corresponding to the sequence of positive integers $((i))_{i\geq 1}$ and the $(a,b)$-type balancing numbers \cite{K} with $a=b=1$. More generally, the upper $k$-gap balancing numbers are shifts of the $t$-sequence balancing numbers \cite{tseq} of $((i))_{i\geq 1}$ with $t=k$. 
\end{example}

\begin{example}\label{ex:2}
The identity $T(0)+T(k)=T(k)$ shows that $(k,2k+1)$ is an upper $k$-gap balancing pair with upper $k$-gap balancer pair $(0,1)$ and counterbalancer 0. In consideration of \eqref{eq:2}, we take $k$ to be the smallest upper $k$-gap balancing number and consider $0$ to be an upper $0$-gap balancing number.
\end{example}

\begin{example}\label{ex:3}
For $k>0$ the identity $T(3k-3)+T(4k-3)=T(5k-4)$ shows $(4k-3,10k-7)$ is an upper $k$-gap balancing number with upper $k$-gap balancer pair $(k-1,4k-3)$ and counterbalancer $5k-4$.
\end{example}

\begin{remark} 
Various terminology has been used when studying \eqref{eq:1} and closely related equations. We attempt to unify the existing terminology in our choice of nomenclature. The smallest index, namely $B-k$, of the triangular numbers appearing on the left side of \eqref{eq:2} can be considered a lower $k$-gap balancing number. Let $L=B-k$. Dash, Ota, and Dash \cite{tbal} referred to such $L$ as $t$-balancing numbers where $t=k$ is the gap size. Alternatively, Panda and Rout \cite{gap} defined $k$-gap balancing numbers using the parity of $k$. When $k$ is odd, a positive integer $g_k$ is a $k$-gap balancing number if 
\begin{equation*}
1+2+\dots+\left(g_k-\frac{k+1}{2}\right)=\left(g_k+\frac{k+1}{2}\right)+\left(g_k+\frac{k+3}{2}\right)+\dots+(g_k+r_k)
\end{equation*}
for some integer $r_k\geq1$. When $k$ is even, a positive integer $g_k=2n+1$ is a $k$-gap balancing number if
\begin{equation*}
1+2+\dots+\left(n-\frac{k}{2}\right)=\left(n+\frac{k}{2}+1\right)+\left(n+\frac{k}{2}+2\right)+\dots+(n+r_k)
\end{equation*}
for some integer $r_k\geq 0$. Then $g_k$ is the median of the deleted gap when $k$ is odd, and $g_k$ is the sum of the two surviving numbers forming the edge of the gap when $k$ is even. These three notions of gap balancing numbers are related. Specifically, a $k$-gap balancing number $g_k$ with $k$-gap balancer $r_k$ corresponds to the upper $k$-gap balancing number $B$ with upper $k$-gap balancer $r$ by
\begin{equation}\label{eq:9}
B=\begin{cases}
g_k+\frac{k+1}{2}, &\text{if $k$ is odd;} \\ \frac{g_k-1}{2}+\frac{k}{2}, &\text{if $k$ is even;} 
\end{cases} \qquad \textrm{and} \qquad r=\begin{cases} r_k-\frac{k-1}{2}, & \text{if $k$ is odd;} \\ r_k-\frac{k}{2}, & \text{if $k$ is even.}\end{cases}
\end{equation}
From \eqref{eq:2} we see that a lower $k$-gap balancing number $L$ corresponds to the upper $k$-gap balancing number $B=L+k$ and the corresponding lower and upper $k$-gap balancers coincide. It is convenient for our purposes to study $\eqref{eq:2}$ using upper $k$-gap balancing numbers since they appear as indices of triangular numbers with balancing and cobalancing numbers being special cases. Nonetheless, results about upper $k$-gap balancing numbers correspond to results about lower $k$-gap and $k$-gap balancing numbers and vice versa via \eqref{eq:9} and the relation $B=L+k$. 
\end{remark}

\section{Classes of upper gap balancing and related numbers}

We present functions which generate a class of upper $k$-gap balancing and balancer pairs from known ones.
\begin{theorem}\label{thm:2}
Let $y=\sqrt{8x^2+8(1-k)x+(2k-1)^2}$ and 
\begin{equation}\label{eq:10}
\begin{bmatrix} f_k(x) \\ g_k(x) \end{bmatrix} = \begin{bmatrix} 3 & 1 \\ 8 & 3 \end{bmatrix} \begin{bmatrix} x \\ y \end{bmatrix} + \begin{bmatrix} 1-k \\ 4-4k \end{bmatrix}.
\end{equation}
If $(x,y)$ is an upper $k$-gap balancing pair, then so is $(f_k(x),g_k(x))$. 
\end{theorem}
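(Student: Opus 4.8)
The plan is to reduce the statement to the observation that the linear part $\left(\begin{smallmatrix}3&1\\8&3\end{smallmatrix}\right)$ of the map \eqref{eq:10}, together with its constant shift, is an automorph of the quadratic form appearing in \eqref{eq:5}; this becomes transparent in the coordinates of Remark \ref{rem:1}. Expanding \eqref{eq:10} gives $f_k(x)=3x+y+(1-k)$ and $g_k(x)=8x+3y+(4-4k)$, where $y=\sqrt{8x^2+8(1-k)x+(2k-1)^2}$, so the core task is to establish the identity $g_k(x)^2=8f_k(x)^2+8(1-k)f_k(x)+(2k-1)^2$ given that $y^2=8x^2+8(1-k)x+(2k-1)^2$, and then to check that $(f_k(x),g_k(x))$ is genuinely an upper $k$-gap balancing pair in the sense of Definitions \ref{def:1} and \ref{def:2}, not merely a solution of \eqref{eq:5}.

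For the identity I would pass to the variable $z=2x+1-k$ of Remark \ref{rem:1}, so that the hypothesis reads $y^2-2z^2=2k^2-1$ by \eqref{eq:7}. A short substitution shows that the transformed quantities $z':=2f_k(x)+1-k$ and $y':=g_k(x)$ satisfy $z'=3z+2y$ and $y'=4z+3y$, equivalently $y'+z'\sqrt{2}=(3+2\sqrt{2})(y+z\sqrt{2})$. Since $3+2\sqrt{2}=(1+\sqrt{2})^2$ has norm $1$ in $\mathbb{Z}[\sqrt{2}]$, multiplicativity of the norm gives $(y')^2-2(z')^2=y^2-2z^2=2k^2-1$; reading the equivalence of Remark \ref{rem:1} backwards with $x$ replaced by $f_k(x)$, this is precisely the desired identity, so $(f_k(x),g_k(x))$ solves \eqref{eq:5}. (The identity can also be checked by direct polynomial expansion using $y^2=8x^2+8(1-k)x+(2k-1)^2$, but the norm computation organizes the same work and explains why the shift $\left(\begin{smallmatrix}1-k\\4-4k\end{smallmatrix}\right)$ must be exactly this.)

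It then remains to confirm the balancing-pair structure. Integrality of $f_k(x)$ and $g_k(x)$ is immediate. By Example \ref{ex:2} we may take $x\geq k$ (and $x\geq 0$ when $k=0$), so from $y>0$ we get $f_k(x)=3x+y+1-k\geq 2k+1>0$ and $g_k(x)=8x+3y+4-4k>0$, whence $g_k(x)$ is the nonnegative square root required in Definition \ref{def:2}. For the corresponding balancer $r'=\tfrac{-(2f_k(x)+1)+g_k(x)}{2}$ from \eqref{eq:3}: since $g_k(x)^2\equiv(2k-1)^2\equiv 1\pmod 8$ the value $g_k(x)$ is odd, so $r'\in\mathbb{Z}$; and the auxiliary identity $C^2-(2B+1)^2=4(B-k)(B-k+1)$, valid for every solution $(B,C)$ of \eqref{eq:5}, applied with $B=f_k(x)\geq k$ gives $g_k(x)\geq 2f_k(x)+1$, hence $r'\geq 0$. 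The only point demanding care is matching the constant shift in \eqref{eq:10}, which the substitution of Remark \ref{rem:1} handles automatically; beyond that I anticipate no genuine obstacle, the brute-force expansion being merely heavier bookkeeping.
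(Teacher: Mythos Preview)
Your proof is correct. The paper's own proof is a one-line direct verification: it simply asserts the algebraic identity
\[
8f_k^2(x)+8(1-k)f_k(x)+(2k-1)^2=(8x+3y+4-4k)^2=g_k(x)^2
\]
and invokes the characterization following Definition~\ref{def:1}. You establish the same identity but by a different, more structural route: you pass to the Pell coordinates $(y,z)$ of Remark~\ref{rem:1}, recognize that the map \eqref{eq:10} becomes multiplication by the unit $3+2\sqrt{2}$ of norm~$1$ in $\mathbb{Z}[\sqrt{2}]$, and invoke multiplicativity of the norm. This explains \emph{why} the particular matrix and shift in \eqref{eq:10} arise, whereas the paper's bare expansion does not. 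You also go beyond the paper in explicitly checking that $(f_k(x),g_k(x))$ is a genuine upper $k$-gap balancing pair rather than merely an integer solution of \eqref{eq:5}: your verification that $f_k(x)\geq k$, that $g_k(x)>0$, and that the associated balancer $r'$ is a nonnegative integer (via the neat factorization $C^2-(2B+1)^2=4(B-k)(B-k+1)$) addresses points the paper leaves implicit. The trade-off is length: the paper's argument is terse and self-contained, while yours is longer but more informative and more careful about the definitional side conditions.
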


\begin{proof} The identity
\begin{equation*}
8f_k^2(x)+8(1-k)f_k(x)+(2k-1)^2=(8x+3\sqrt{8x^2+8(1-k)x+(2k-1)^2}+4-4k)^2
\end{equation*}
shows $f_k(x)$ is an upper $k$-gap balancing number with associated upper $k$-gap Lucas-balancing number $g_k(x)$ by the comments following Definition \ref{def:1}.
\end{proof}

\begin{theorem}\label{thm:3}
Let $y=\sqrt{8x^2+8k x+1}$ and 
\begin{equation}\label{eq:11}
\begin{bmatrix} F_k(x) \\ G_k(x) \end{bmatrix} = \begin{bmatrix} 3 & 1 \\ 8 & 3 \end{bmatrix}\begin{bmatrix} x \\ y \end{bmatrix}+\left[\begin{array}{r} k \\ 4k \end{array}\right].
\end{equation}
If $(x,y)$ is an upper $k$-gap balancer pair, then so is $(F_k(x),G_k(x))$.
\end{theorem}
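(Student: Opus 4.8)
The plan is to mirror exactly the argument used for Theorem~\ref{thm:2}, now applied to the balancer equation~\eqref{eq:6} rather than the balancing equation~\eqref{eq:5}. Recall from the comments following Definition~\ref{def:1} that $(x,y)$ being an upper $k$-gap balancer pair is equivalent to $y^2 = 8x^2 + 8kx + 1$ with $x\geq 0$ and $y>0$; so it suffices to verify that $G_k(x)^2 = 8F_k(x)^2 + 8kF_k(x) + 1$ and that $F_k(x)\geq 0$, $G_k(x)>0$. The heart of the matter is therefore the single polynomial identity
\begin{equation*}
8F_k(x)^2 + 8kF_k(x) + 1 = \bigl(8x + 3\sqrt{8x^2+8kx+1} + 4k\bigr)^2,
\end{equation*}
that is, $G_k(x) = 8x + 3y + 4k$ where $y=\sqrt{8x^2+8kx+1}$, which is precisely the second coordinate of~\eqref{eq:11}.

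First I would substitute $F_k(x) = 3x + y + k$ into the left-hand side and expand, repeatedly using the defining relation $y^2 = 8x^2 + 8kx + 1$ to eliminate every occurrence of $y^2$. After collecting terms this should reduce to a sum of a perfect square in $x$ and $y$ plus lower-order corrections matching the right-hand side $(8x + 3y + 4k)^2 = 64x^2 + 9y^2 + 16k^2 + 48xy + 64kx + 24ky$; applying $9y^2 = 72x^2 + 72kx + 9$ on the right side then makes both sides visibly equal as polynomials in $x, y, k$. This is a routine but slightly lengthy algebraic verification, entirely analogous to the identity asserted in the proof of Theorem~\ref{thm:2}. Once the identity is in hand, the equivalence recorded after Definition~\ref{def:1} immediately gives that $F_k(x)$ is an upper $k$-gap balancing \emph{balancer} with associated Lucas-balancer $G_k(x)$.

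The remaining point is to confirm the sign and integrality conditions so that $(F_k(x), G_k(x))$ is genuinely a balancer pair and not merely an algebraic solution of~\eqref{eq:6}. Integrality is clear since the matrix and shift vector in~\eqref{eq:11} have integer entries and $y = \hat r$ is an integer whenever $(x,y)$ is a balancer pair. For positivity, note $x\geq 0$ and $y\geq 1$ force $F_k(x) = 3x+y+k \geq k \geq 0$ and $G_k(x) = 8x+3y+4k \geq 3 > 0$; moreover one should check $G_k(x) = \sqrt{8F_k(x)^2 + 8kF_k(x)+1}$ is the \emph{positive} square root, which is immediate since $G_k(x)>0$.

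The main obstacle, such as it is, will simply be bookkeeping in the polynomial expansion: one must be careful to substitute for $y^2$ consistently on both sides and track the cross term $48xy$, which has no counterpart to cancel against unless one first reduces $y^2$ everywhere. There is no conceptual difficulty here, since equation~\eqref{eq:6} has the same $8x^2 + (\text{linear}) + (\text{constant})$ shape as~\eqref{eq:5} and the generating matrix $\begin{bmatrix}3&1\\8&3\end{bmatrix}$ is the fundamental automorph of the Pell form $y^2 - 8x^2$; the shift vector $\bigl[\begin{smallmatrix} k \\ 4k\end{smallmatrix}\bigr]$ is exactly what is needed to absorb the linear term $8kx$, just as $\bigl[\begin{smallmatrix}1-k\\4-4k\end{smallmatrix}\bigr]$ absorbs $8(1-k)x$ in Theorem~\ref{thm:2}.
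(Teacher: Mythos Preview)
Your proposal is correct and follows essentially the same approach as the paper: the paper's proof consists entirely of asserting the identity $8F_k^2(x)+8kF_k(x)+1=(8x+3\sqrt{8x^2+8kx+1}+4k)^2$ and invoking the characterization after Definition~\ref{def:1}, exactly as you outline. Your additional remarks on integrality and positivity are not made explicit in the paper but are harmless elaborations.
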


\begin{proof}
The identity
\begin{equation*}
8F_k^2(x)+8k F_k(x)+1=(8x+3\sqrt{8x^2+8k x+1}+4k)^2
\end{equation*}
shows $F_k(x)$ is an upper $k$-gap balancer with associated upper $k$-gap Lucas-balancer $G_k(x)$. 
\end{proof}

Alternatively, the functions in Theorems \ref{thm:2} and \ref{thm:3} can be derived computing a class of solutions to \eqref{eq:7} and \eqref{eq:8} and rewriting in terms of $x$ and $y$. Dash et al.\ \cite{tbal} exhibit similar functions to those in Theorems \ref{thm:2} and \ref{thm:3} for lower $k$-gap balancing and balancer pairs. The next theorem shows that these functions for the upper $k$-gap balancing and balancer pairs work in tandem, a result tacitly understood but not proven previously in the literature.

\begin{theorem}\label{thm:4}
If $(r,\hat{r})$ is the upper $k$-gap balancer pair associated to the upper $k$-gap balancing pair $(B,C)$, then $(F_k(r),G_k(r))$ is the upper $k$-gap balancer pair associated to the upper $k$-gap balancing pair $(f_k(B),g_k(B))$.
\end{theorem}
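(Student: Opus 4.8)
The plan is to verify the claimed correspondence directly from the explicit formulas in Theorem~\ref{thm:1}, which convert freely between a balancing pair $(B,C)$ and its balancer pair $(r,\hat r)$. Since $(f_k(B), g_k(B))$ is an upper $k$-gap balancing pair by Theorem~\ref{thm:2} and $(F_k(r), G_k(r))$ is an upper $k$-gap balancer pair by Theorem~\ref{thm:3}, it suffices to check that the unique balancer pair \emph{associated} to $(f_k(B), g_k(B))$ --- the one obtained by applying the formulas of Theorem~\ref{thm:1}(a) and (c) with $B \mapsto f_k(B)$, $C \mapsto g_k(B)$ --- is exactly $(F_k(r), G_k(r))$.

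First I would record the defining relations in matrix form: $f_k(B) = 3B + C + (1-k)$, $g_k(B) = 8B + 3C + (4-4k)$, and $F_k(r) = 3r + \hat r + k$, $G_k(r) = 8r + 3\hat r + 4k$, where $C = \sqrt{8B^2 + 8(1-k)B + (2k-1)^2}$ and $\hat r = \sqrt{8r^2 + 8kr + 1}$. Then, by Theorem~\ref{thm:1}(a), the balancer associated to $(f_k(B), g_k(B))$ is
\begin{equation*}
\frac{-2 f_k(B) + g_k(B) - 1}{2} = \frac{-2(3B+C+1-k) + (8B+3C+4-4k) - 1}{2} = \frac{2B + C + 1 - 2k}{2} = B + \frac{C-1}{2} - k + \frac{1}{2}.
\end{equation*}
Using Theorem~\ref{thm:1}(a) for the original pair, $\frac{C-1}{2} = r + B + \frac{1}{2}$, so this simplifies; alternatively I would substitute $C = 2B + 2r + 1$ (from (a)) and $\hat r = 4B - C + 2 - 2k = 2B - 2r + 1 - 2k$ (from (c)) and check the result collapses to $3r + \hat r + k = F_k(r)$. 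The analogous one-line computation using Theorem~\ref{thm:1}(c), namely $\hat r' = 4 f_k(B) - g_k(B) + 2 - 2k$, should yield $8r + 3\hat r + 4k = G_k(r)$.

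The only genuine content is bookkeeping with the linear substitutions $C = 2B + 2r + 1$ and $\hat r = 2B - 2r + 1 - 2k$ from Theorem~\ref{thm:1}, so the main obstacle is purely clerical: keeping the $k$-dependent constants straight and confirming that the additive vectors $(1-k, 4-4k)$ and $(k, 4k)$ in \eqref{eq:10} and \eqref{eq:11} are precisely what makes the two affine maps intertwine under the Theorem~\ref{thm:1} change of variables. (Conceptually, both maps are the same linear map $\left[\begin{smallmatrix}3&1\\8&3\end{smallmatrix}\right]$ conjugated by an affine change of coordinates, so they must agree; the explicit check just makes this precise.) I would present the computation as two short displayed equations, one for the balancer and one for the Lucas-balancer, each reduced using (a) and (c), and conclude that the associated balancer pair of $(f_k(B), g_k(B))$ is $(F_k(r), G_k(r))$ as claimed.
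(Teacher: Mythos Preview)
Your proposal is correct and follows essentially the same route as the paper: compute the balancer of $(f_k(B),g_k(B))$ via Theorem~\ref{thm:1}(a), obtain $\tfrac{2B+C+1-2k}{2}$, and identify this with $F_k(r)=3r+\hat r+k$ using the relations $C=2B+2r+1$ and $\hat r=2B-2r+1-2k$. The only difference is that the paper observes up front that, since $\hat r$ is determined by $r$ (and $k$), it suffices to match the balancer alone; your proposed second check of the Lucas-balancer via Theorem~\ref{thm:1}(c) is therefore redundant, though harmless.
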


\begin{proof} Since $C$ and $\hat{r}$ are expressions in terms of $k$, $B$, and $r$ respectively, it suffices to show $F_k(r)$ is the upper $k$-gap balancer associated to the upper $k$-gap balancing number $f_k(B)$.
Using Theorem \ref{thm:1} with \eqref{eq:11}, we see
\begin{equation*}
F_k(r)=3r+\hat{r}+k=\frac{2B+C+1-2k}{2}.
\end{equation*}
On the other hand, the balancer of $f_k(B)$ using Theorem \ref{thm:1} is 
\begin{equation*}
\frac{-2f_k(B)+g_k(B)-1}{2}=\frac{2B+C+1-2k}{2}
\end{equation*}
since $f_k(B)=3B+C+1-k$ and $g_k(B)=8B+3C+4-4k$ using \eqref{eq:10}. The result now follows.
\end{proof}

Since $f_k$ and $g_k$ are strictly increasing on $[k,\infty)$, their inverses $f^{-1}_k$ and $g^{-1}_k$ exist. Similarly we see that $F_k$ and $G_k$ are strictly increasing on $[0,\infty)$, so their inverses $F^{-1}_k$ and $G^{-1}_k$ exist. A straightforward computation shows
\begin{equation}\label{eq:12}
\begin{bmatrix} f^{-1}_k(x) \\ g^{-1}_k(x) \end{bmatrix} = \left[\begin{array}{rr} 3 & -1 \\ -8 & 3 \end{array}\right] \begin{bmatrix} x \\ y \end{bmatrix} + \begin{bmatrix} 1-k \\ 4k-4 \end{bmatrix}
\end{equation}
and
\begin{equation*}
\begin{bmatrix} F^{-1}_k(x) \\ G^{-1}_k(x) \end{bmatrix}= \left[\begin{array}{rr} 3 & -1 \\ -8 & 3 \end{array}\right] \begin{bmatrix} x \\ y \end{bmatrix} + \left[\begin{array}{r} k \\ -4k \end{array}\right].
\end{equation*}

Given an upper $k$-gap balancing pair $(x_0,y_0)$ the functions $f_k$, $g_k$, and their inverses can be applied iteratively to form a class of solutions $((x_i,y_i))_{i\in \mathbb{Z}}$ to \eqref{eq:5}. Moreover, $F_k$, $G_k$, and their inverses can be used iteratively to form the class of associated upper $k$-gap balancer pairs by Theorem \ref{thm:4}.

The On-line Encyclopedia of Integer Sequences \cite{OEIS} contains several sequences related to upper $k$-gap balancing numbers. These include the upper 2-gap Lucas-balancing numbers (\seqnum{A077443}), 2-gap counterbalancers (\seqnum{A124124}), upper 2-gap Lucas-balancer numbers (\seqnum{A077446}), upper 5-gap Lucas-balancing numbers (\seqnum{A275797}), and upper 5-gap Lucas-balancer numbers (\seqnum{A076293}).

\section{Structure of classes of upper gap balancing numbers}

In this section, we present three new theorems and a conjecture regarding the structure of classes of upper $k$-gap balancing numbers. We begin by showing that it is possible to reindex each class of solutions to \eqref{eq:5} so that the terms with nonnegative index correspond to upper $k$-gap balancing numbers.

\begin{proposition}\label{prop:5}
Every upper $k$-gap balancing pair lies in a class of solutions $((x_{i},y_i))_{i \in \mathbb{Z}}$ of \eqref{eq:5} generated by applying \eqref{eq:10} and \eqref{eq:12} to a unique upper $k$-gap balancing pair $(x_0,y_0)$ satisfying $k \leq x_0 < 4k+2$. Moreover $(x_i,y_i)$ is an upper $k$-gap balancing pair whenever $i\geq0$.
\end{proposition}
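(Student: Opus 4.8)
The plan is to show that every class of solutions to \eqref{eq:5} has a ``minimal'' representative in the window $k \le x_0 < 4k+2$, and that once we start there, repeated application of $f_k$ never leaves the range of genuine upper $k$-gap balancing numbers. First I would observe that an upper $k$-gap balancing pair $(B,C)$ satisfies $B \ge k$ (by Example \ref{ex:2} and the requirement $r \ge 0$ in \eqref{eq:3}), so the set of upper $k$-gap balancing numbers in any class is bounded below. Take the smallest such $x_0$ in the class; I claim $x_0 < 4k+2$. The argument is: if $x_0 \ge 4k+2$, then $f_k^{-1}(x_0) = 3x_0 - y_0 + 1 - k$ is again a valid upper $k$-gap balancing number (so smaller elements exist, contradicting minimality). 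To see that $f_k^{-1}(x_0)$ is still $\ge k$, I would use $y_0 = \sqrt{8x_0^2 + 8(1-k)x_0 + (2k-1)^2}$ and show the inequality $3x_0 - y_0 + 1 - k \ge k$, i.e. $3x_0 + 1 - 2k \ge y_0$, which (both sides positive in this range) is equivalent after squaring to $9x_0^2 + 6x_0(1-2k) + (1-2k)^2 \ge 8x_0^2 + 8(1-k)x_0 + (2k-1)^2$, i.e. $x_0^2 - (2+4k)x_0 \ge 0$, i.e. $x_0 \ge 4k+2$. So precisely when $x_0 \ge 4k+2$ the predecessor is still admissible; hence the minimal element satisfies $k \le x_0 < 4k+2$. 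Uniqueness within the class follows because $f_k$ is strictly increasing on $[k,\infty)$, so the elements of the class that are $\ge k$ form a strictly increasing bi-infinite-on-one-side chain $x_0 < x_1 < x_2 < \cdots$ with a unique least element; and two pairs in the same window would give two least elements of the same class, impossible.

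For the ``moreover'' clause, I would show by induction that $x_i \ge k$ for all $i \ge 0$: the base case is $x_0 \ge k$, and the inductive step uses $f_k(x) = 3x + y + 1 - k \ge 3x + 1 - k \ge 3k + 1 - k = 2k+1 > k$ for $x \ge k$ (since $y \ge 0$). Combined with Theorem \ref{thm:2}, which guarantees $(f_k(x_i), g_k(x_i))$ is again a solution to \eqref{eq:5} with the correct sign of square root, this shows each $(x_i, y_i)$ with $i \ge 0$ is a bona fide upper $k$-gap balancing pair: the defining condition is exactly that $8x^2 + 8(1-k)x + (2k-1)^2$ is a perfect square with $x$ a positive integer $\ge k$, and we must also check that the resulting balancer $r$ from \eqref{eq:3} is a nonnegative integer, which follows from Theorem \ref{thm:1}(a) since $r = \frac{-2x_i + y_i - 1}{2}$ and one checks $y_i \ge 2x_i + 1$ for $x_i \ge k$ (again by squaring: $8x^2 + 8(1-k)x + (2k-1)^2 \ge (2x+1)^2$ reduces to $x^2 + (1-2k)x + k^2 - k \ge 0$, i.e. $(x-k)^2 + (x - k) \ge 0$, which holds for $x \ge k$), with integrality of $r$ coming from parity considerations on $y_i$ inherited along the recursion from $y_0$.

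The main obstacle I expect is bookkeeping the boundary case $x_0 = k$ itself (the pair $(k, 2k+1)$ from Example \ref{ex:2}): here $f_k^{-1}(k)$ formally gives $3k - (2k+1) + 1 - k = -1 < k$, consistent with $k$ being minimal, but one should double-check that this negative value is not accidentally a ``second'' copy of an upper $k$-gap balancing number and that the reindexing genuinely places $(k,2k+1)$ at index $0$ in its class when $k$ is in the window. A related subtlety is that the window is half-open and I should confirm no class has its minimal element exactly at $4k+2$ — but the computation above shows that if $x_0 = 4k+2$ then $f_k^{-1}(x_0) = k$ is still admissible and strictly smaller, so the minimum is never attained at the right endpoint, justifying the strict inequality $x_0 < 4k+2$. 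Everything else is routine algebra with the quadratic $8x^2 + 8(1-k)x + (2k-1)^2$ and its square root.
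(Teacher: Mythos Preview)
Your argument is correct and reaches the same conclusion, but the mechanism differs from the paper's. The paper introduces the reference class $((b_i,c_i))_{i\in\mathbb Z}$ generated by $(b_0,c_0)=(k,2k+1)$, locates any given $B$ in an interval $b_j\le B<b_{j+1}$, and then applies $f_k^{-1}$ exactly $j$ times, using only the monotonicity of $f_k^{-1}$ to conclude $k=b_0\le (f_k^{-1})^{(j)}(B)<b_1=4k+2$. Uniqueness is then argued by a second sandwich $b_{j-i}\le x_0'<b_{j+1-i}$. You instead run a direct descent: you characterise \emph{algebraically} the exact threshold at which $f_k^{-1}$ stops producing admissible elements, via the equivalence $f_k^{-1}(x)\ge k \iff x\ge 4k+2$ obtained by squaring. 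This buys you a self-contained argument that does not need the auxiliary sequence $(b_i)$ and simultaneously explains why the window is half-open at $4k+2$; the paper's version is terser but leans on the previously stated monotonicity and on Example~\ref{ex:2} for the value $b_1=4k+2$. Your treatment of the ``moreover'' clause (the induction $x_i\ge k$ and the check $y_i\ge 2x_i+1$ ensuring $r\ge 0$) is more explicit than the paper's, which leaves that clause essentially to Theorem~\ref{thm:2}. One small tightening: your uniqueness sentence would read more cleanly if you noted that your ``precisely when'' equivalence already forces every element of the class lying in $[k,4k+2)$ to have predecessor $<k$, hence to be the unique minimum of the chain; that is the real reason two window elements cannot coexist.
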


\begin{proof} Assume $(B,C)$ is an upper $k$-gap balancing pair. Let $((b_i,c_i))_{i\in \mathbb{Z}}$ be the class of solutions to \eqref{eq:5} generated by $(b_0,c_0)=(k,2k+1)$ via $(b_{i+1},c_{i+1})=(f_k(b_i),g_k(b_i))$. Then $B\geq k$ by minimality of $b_0$. Suppose $b_{k}\leq B< b_{k+1}$ for some $k\geq 0$. Applying $f^{-1}_k$ $k$ times we obtain $b_0\leq (f^{-1}_k)^{(k)}(B)<b_1=4k+2$. Thus $(B,C)$ lies in the class of solutions generated by the upper $k$-gap balancing pair $(x_0,y_0)$ where $x_0= (f^{-1}_k)^{(k)}(B)$ and $y_0=(g_k^{-1})^{(k)}(B)$.

To establish uniqueness, suppose $(B,C)$ also lies in the class of solutions generated by the upper $k$-gap balancing pair $(x_0',y_0')$ with $k\leq x_0'<4k+2$. Then $B=f_k^{(i)}(x_0')$ for some $i\geq0$ so $b_{k}\leq f_k^{(i)}(x_0')<b_{k+1}$. Apply $f^{-1}_k$ $i$ times to see 
$b_{k-i}\leq x_0'<b_{k+1-i}$. Thus $i=k$ which implies $x_0=x_0'$ and $y_0=y_0'$ since $f^{(k)}_k$ is strictly increasing on $[k, \infty)$. 
\end{proof}

The last proposition shows that upper $k$-gap balancing numbers lying in ${[k, 4k+2)}$ can be used to generate all upper $k$-gap balancing numbers using \eqref{eq:10}. Panda and Ray \cite{cobal} showed that all cobalancing numbers lie in the class generated by the upper $0$-gap balancing pair $(0,1)$. Similarly, Behera and Panda \cite{bal} showed that all balancing numbers lie in the class generated by the upper $1$-gap balancing pair $(1,3)$. For the remainder of the paper, a class of upper $k$-gap balancing pairs $((B_i,C_i))_{i\geq0}$ refers to the sequence of upper $k$-gap balancing pairs generated from $(B_0,C_0)$ using \eqref{eq:10} and \eqref{eq:12} with $k\leq B_0<4k+2$. 

For $k>1$, determining the number of classes of upper $k$-gap balancing numbers generated using \eqref{eq:10} is a subtle problem as recognized by Rout and Panda \cite{kgap}. Dash et al.~\cite{tbal} considered cases when there are at most two classes. Fortunately the search for initial upper $k$-gap balancing pairs $(x_0,y_0)$ guaranteed by Proposition \ref{prop:5} can be made more efficient. For $k>0$, observe $(x_{-1},y_{-1})=(f^{-1}_k(x_0),g^{-1}_k(x_0))$ are also solutions of \eqref{eq:5} with $0\leq x_{-1}<k$ and $y_{-1}>0$. Thus the number of classes is the same as the number of integer solutions $(x_{-1},y_{-1})$ of \eqref{eq:5} with  $0\leq x_{-1}<k$ and $y_{-1}>0$. For convenience we refer to such a pair $(x_{-1},y_{-1})$ as the {\it seed} of the class of upper $k$-gap balancing numbers $((x_i,y_i))_{i\geq 0}$.

Rout and Panda \cite{kgap} exhibited the existence of two classes of $k$-gap balancing numbers for $k>1$ and a third class when $2k^2-1$ is a perfect square. Dash et al.~\cite{tbal} also observed these first two classes. We present these results in terms of upper $k$-gap balancing numbers in the following three examples.

\begin{example}\label{ex:4}
For $k\geq 0$, a class of upper $k$-gap balancing pairs is generated from the seed $(0, 2k-1)$. The initial upper $k$-gap balancing pair is given in Example \ref{ex:2}. \end{example}

\begin{example}\label{ex:5}
For $k>0$ the seed $(k-1,2k-1)$ generates a class of upper $k$-gap balancing pairs. The initial upper $k$-gap balancing pair is given in Example \ref{ex:3}. This class coincides with the class presented in Example \ref{ex:4} when $k=1$ .
\end{example}

\begin{example}\label{ex:6}
When $2k^2-1$ is a perfect square, then $k$ is odd and $\left(\frac{k-1}{2},\sqrt{2k^2-1}\,\right)$ is a solution in the integers to \eqref{eq:5}. Thus we obtain the class of upper $k$-gap balancing pairs generated from the seed $(\frac{k-1}{2},\sqrt{2k^2-1})$. For $k=1$, this class coincides with the classes presented in Examples \ref{ex:4} and \ref{ex:5}. The initial values for which this class appears are $k=1,5,29,169,985$.
\end{example}

Examples $\ref{ex:4}$ and $\ref{ex:5}$ show there are at least two classes of upper $k$-gap balancing numbers when $k>1$. Depending on the value of $k>1$, there can be additional classes. By the prior discussion, there are at most $k$ candidates for seeds when $k>0$. These observations lead to the following elementary upper bound for the number of classes.

\begin{theorem}
The number of classes of upper $k$-gap balancing numbers is at most $\max\{1,k\}$.
\end{theorem}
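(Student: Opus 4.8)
The plan is to prove the bound by counting the number of seeds, as set up in the discussion immediately preceding the statement. Recall that for $k>0$ a seed is an integer solution $(x_{-1},y_{-1})$ of \eqref{eq:5} with $0 \leq x_{-1} < k$ and $y_{-1}>0$, and that distinct classes correspond to distinct seeds; for $k=0$ the single class of cobalancing numbers is generated from $(0,1)$, so the bound $\max\{1,k\}=1$ holds trivially in that case. So assume $k>0$ and it suffices to show there are at most $k$ seeds.

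First I would observe that the seed constraint $0 \leq x_{-1} < k$ restricts the first coordinate to exactly $k$ possible values, namely $x_{-1} \in \{0,1,\dots,k-1\}$. For each such fixed value of $x_{-1}$, equation \eqref{eq:5} reads $y^2 = 8x_{-1}^2 + 8(1-k)x_{-1} + (2k-1)^2$, whose right-hand side is a fixed nonnegative integer (it must be a perfect square for a solution to exist, but we do not need to decide this). Hence there is at most one value $y_{-1}>0$ completing it to a solution. Therefore the total number of seeds is at most $k$, and since each class has a unique seed, the number of classes is at most $k$. Combined with the $k=0$ case this yields the bound $\max\{1,k\}$.

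I expect there to be essentially no obstacle here: the entire argument is the pigeonhole-type observation that a seed is determined by its first coordinate, which ranges over a set of size $k$, together with the already-established bijection between classes and seeds from Proposition \ref{prop:5} and the surrounding remarks. The only points warranting a sentence of care are (i) confirming the $k=0$ edge case separately, since the notion of seed was introduced only for $k>0$, and (ii) noting that we are bounding, not computing, so we need not worry about which of the $k$ candidate values of $x_{-1}$ actually yield perfect squares. One could remark in passing that Examples \ref{ex:4} and \ref{ex:5} exhibit the seeds $x_{-1}=0$ and $x_{-1}=k-1$, so the bound is attained in spirit at both ends of the range, and the real interest — addressed by the conjecture that follows — lies in how many of the interior values $1 \leq x_{-1} \leq k-2$ contribute.
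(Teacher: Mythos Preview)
Your proposal is correct and takes essentially the same approach as the paper: the theorem is stated there as an immediate consequence of the preceding discussion, which establishes that classes for $k>0$ are in bijection with seeds $(x_{-1},y_{-1})$ having $0\le x_{-1}<k$, and your proof simply spells out that each of the $k$ possible values of $x_{-1}$ determines at most one positive $y_{-1}$, together with the $k=0$ case handled separately.
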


The theory of Pell equations (cf. Nagell \cite[pp.~204--205]{N}) shows that classes of solutions appear in pairs of conjugate classes. The classes described in Examples $\ref{ex:4}$ and $\ref{ex:5}$ form such a conjugate pair and are said to be {\it conjugates} of each other.  It is possible to describe conjugate pairs for \eqref{eq:5} more precisely in terms of their seeds. We use this description to show that the classes of upper $k$-gap balancer pairs corresponding to conjugate classes of upper $k$-gap balancing pairs are conjugate.

\begin{theorem}\label{thm:7}
Let $k>0$. If $(x,y)$ is a seed to \eqref{eq:5}, then $(k-1-x,y)$ is also a seed. Furthermore, the corresponding classes of solutions are conjugate. 
\end{theorem}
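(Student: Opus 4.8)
The argument splits along the two assertions of the theorem.

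First I would check that $(k-1-x,y)$ solves \eqref{eq:5}. The quadratic $8x^{2}+8(1-k)x+(2k-1)^{2}$ has axis of symmetry $x=\frac{k-1}{2}$, and $x$ and $k-1-x$ are reflections of one another across that line; hence substituting $k-1-x$ for $x$ does not change the value, so $8(k-1-x)^{2}+8(1-k)(k-1-x)+(2k-1)^{2}=y^{2}$. (Equivalently, one expands and sees that the linear and quadratic contributions collapse back to $8x^{2}+8(1-k)x$.) Since $k>0$, the condition $0\le x<k$ is equivalent to $0\le k-1-x<k$, and $y>0$ is unchanged, so $(k-1-x,y)$ again satisfies the defining conditions of a seed.

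For the conjugacy statement I would move to the Pell-like equation \eqref{eq:7} via the substitution $z=2x+1-k$ of Remark \ref{rem:1}: a solution $(x,y)$ of \eqref{eq:5} with $y>0$ becomes a solution $(y,z)$ of $y^{2}-2z^{2}=2k^{2}-1$, the map $(x,y)\mapsto(k-1-x,y)$ becomes $(y,z)\mapsto(y,-z)$, and a short computation shows that the generating map \eqref{eq:10} becomes $(y,z)\mapsto(3y+4z,\,2y+3z)$ — that is, multiplication of $y+z\sqrt{2}$ by the fundamental solution $3+2\sqrt{2}$ of $x^{2}-2y^{2}=1$ — while \eqref{eq:12} corresponds to multiplication by $3-2\sqrt{2}$. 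Thus the class of solutions of \eqref{eq:5} generated by $(x,y)$ corresponds exactly to the orbit $\{(3+2\sqrt{2})^{i}(y_{0}+z_{0}\sqrt{2}):i\in\mathbb{Z}\}$, which, restricted to $y>0$, is the class of $(y_{0},z_{0})$ in the sense of Nagell \cite{N}. Applying the nontrivial automorphism $\sigma$ of $\mathbb{Q}(\sqrt{2})$ gives $\sigma\big((3+2\sqrt{2})^{i}\beta\big)=(3+2\sqrt{2})^{-i}\sigma(\beta)$, so $\sigma$ carries this orbit onto the orbit of $\sigma(\beta)=y_{0}-z_{0}\sqrt{2}$, i.e.\ onto the conjugate class. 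Since $\sigma$ is precisely the map $(x,y)\mapsto(k-1-x,y)$, the class generated by $(k-1-x,y)$ is the conjugate of the class generated by $(x,y)$, and in particular $(k-1-x,y)$ is the seed of that conjugate class.

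The one delicate point is lining up the paper's notion of a class — an orbit under \eqref{eq:10} and \eqref{eq:12} — with Nagell's notion of a class of Pell solutions; once the generating maps are identified with multiplication by $3\pm2\sqrt{2}$, the rest is the standard fact that Galois conjugation permutes Pell classes by pairing each with its conjugate. I expect that identification, rather than any genuine computation, to be where the care is needed. A machinery-free alternative avoids $\sqrt{2}$ altogether: if $((x_{i},y_{i}))_{i\in\mathbb{Z}}$ is the class generated by $(x,y)$, a direct check shows $x_{i}':=k-1-x_{i}$ and $y_{i}':=y_{i}$ satisfy $(x_{i+1}',y_{i+1}')=(f_{k}^{-1}(x_{i}'),g_{k}^{-1}(x_{i}'))$, so $((x_{i}',y_{i}'))_{i\in\mathbb{Z}}$ is again a class of solutions of \eqref{eq:5} (traversed in the opposite direction), which one then matches with the conjugate class via Nagell's description. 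Finally, when $2k^{2}-1$ is a perfect square one has $x=\frac{k-1}{2}=k-1-x$, so the class is its own conjugate, in agreement with Example \ref{ex:6}.
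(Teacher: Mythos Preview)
Your proof is correct and follows the same approach as the paper: verify the symmetry $x\mapsto k-1-x$ preserves \eqref{eq:5}, check the seed bounds, then pass via $z=2x+1-k$ to \eqref{eq:7} where the reflection becomes $(y,z)\mapsto(y,-z)$ and conjugacy follows from Nagell. The paper's version is terser---it simply cites \cite[p.~205]{N} for the conjugacy of the classes of $(y,z)$ and $(y,-z)$---whereas you unpack this by identifying \eqref{eq:10} with multiplication by $3+2\sqrt{2}$ and invoking the Galois automorphism; your extra care in aligning the paper's orbit-based classes with Nagell's Pell classes addresses a point the paper leaves implicit.
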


\begin{proof} 
The identity
\begin{equation*}
8(k-1-x)^2+8(1-k)(k-1-x)+(2k-1)^2=8x^2+8(1-k)x+(2k-1)^2
\end{equation*}
shows that $(k-1-x,y)$ is a solution to \eqref{eq:5} whenever $(x,y)$ is a solution. Furthermore $0\leq x<k$ since $(x,y)$ is a seed which implies $0\leq k-1-x<k$. Thus $(k-1-x,y)$ is also a seed. 

Using the substitution given in Remark \ref{rem:1}, the seeds $(x,y)$ and $(k-1-x,y)$ correspond to the solutions $(y,z)$ and $(y,-z)$ of \eqref{eq:7} whose classes are conjugate by the theory of Pell equations \cite[p.~205]{N}. It follows that the classes corresponding to $(x,y)$ and $(k-1-x,y)$ are conjugate. 
\end{proof}

\begin{theorem}\label{thm:8}
The classes of upper $k$-gap balancer pairs corresponding to conjugate classes of upper $k$-gap balancing pairs are conjugate.
\end{theorem}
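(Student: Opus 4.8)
The plan is to transport the whole problem onto the pair of conjugate Pell equations \eqref{eq:7} and \eqref{eq:8} furnished by Remark~\ref{rem:1}, where conjugacy of classes has its classical Pell-theoretic meaning (cf.\ \cite[p.~205]{N}): two classes of solutions of the same equation are conjugate when negating the second coordinate of each solution of one produces the solutions of the other. First I record the dictionary from Remark~\ref{rem:1}: an upper $k$-gap balancing pair $(x,y)$ corresponds to the solution $(y,\,2x+1-k)$ of \eqref{eq:7}, while an upper $k$-gap balancer pair $(x,y)$ corresponds to the solution $(y,\,2x+k)$ of \eqref{eq:8}. Using parts (a) and (c) of Theorem~\ref{thm:1} (so that $\hat r=2(2B+1-k)-C$ and $2r+k=C-(2B+1-k)$), one checks that in these coordinates the passage from a balancing pair $(B,C)$ to its associated balancer pair $(r,\hat r)$ becomes the linear map
\begin{equation*}
\psi(u,v)=(2v-u,\ u-v),
\end{equation*}
and a one-line computation confirms that $\psi$ sends solutions of \eqref{eq:7} to solutions of \eqref{eq:8}. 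Theorem~\ref{thm:4}, together with its version for the inverse maps, then says precisely that $\psi$ carries the class of solutions of \eqref{eq:7} attached to a given class of upper $k$-gap balancing pairs onto the class of solutions of \eqref{eq:8} attached to the corresponding class of upper $k$-gap balancer pairs.

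Next I compare $\psi$ with conjugation. Writing $\overline{(s,t)}=(s,-t)$, a direct computation gives
\begin{equation*}
\psi(u,-v)=(-u-2v,\ u+v)=\begin{bmatrix}-3&4\\2&-3\end{bmatrix}\overline{\psi(u,v)}.
\end{equation*}
The matrix $\left[\begin{smallmatrix}-3&4\\2&-3\end{smallmatrix}\right]$ has determinant $1$ and preserves the form $y^2-2z^2$ (it is multiplication by the unit $-3+2\sqrt2$ of $\mathbb{Z}[\sqrt2]$), so by the theory of Pell equations (as in the proof of Theorem~\ref{thm:7} and in Nagell \cite[p.~205]{N}) it permutes each class of solutions of \eqref{eq:8}. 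Hence $\psi(u,-v)$ and $\overline{\psi(u,v)}$ lie in the same class of solutions of \eqref{eq:8}.

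Now the argument assembles. Let $\mathcal{B}_1,\mathcal{B}_2$ be conjugate classes of upper $k$-gap balancing pairs, and let $\mathcal{R}_1,\mathcal{R}_2$ be the corresponding classes of upper $k$-gap balancer pairs. Choose a solution $(u,v)$ of \eqref{eq:7} attached to a pair of $\mathcal{B}_1$; conjugacy of $\mathcal{B}_1$ and $\mathcal{B}_2$ means $(u,-v)$ is attached to a pair of $\mathcal{B}_2$. By the first paragraph, $\psi(u,v)$ lies in the class of solutions of \eqref{eq:8} attached to $\mathcal{R}_1$ and $\psi(u,-v)$ lies in the class attached to $\mathcal{R}_2$; by the second paragraph, $\psi(u,-v)$ also lies in the class of $\overline{\psi(u,v)}$, namely the conjugate of the class attached to $\mathcal{R}_1$. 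Since classes partition the solution set, the class attached to $\mathcal{R}_2$ is the conjugate of the class attached to $\mathcal{R}_1$, i.e.\ $\mathcal{R}_1$ and $\mathcal{R}_2$ are conjugate.

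I expect the second paragraph to be the real obstacle. The intertwiner $\psi$ does not commute with conjugation on the nose; it commutes only up to the automorph $\left[\begin{smallmatrix}-3&4\\2&-3\end{smallmatrix}\right]$ of $y^2-2z^2$ (equivalently, up to a stray unit factor $-3+2\sqrt2$ in $\mathbb{Z}[\sqrt2]$). Drawing the conclusion therefore forces one to use the Pell-equation notion of class --- orbits under the full unit group, including $-1$, as in Nagell --- so that this discrepancy is invisible at the level of classes; with a narrower notion of class one would obtain only that $\mathcal{R}_1$ and $\mathcal{R}_2$ agree up to the symmetry $(y,z)\mapsto(-y,-z)$. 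Verifying that the relevant matrix is a genuine determinant-one automorph of the form, hence class-preserving, is the crux.
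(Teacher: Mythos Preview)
Your argument is correct and takes a genuinely different route from the paper's proof. The paper works concretely with seeds: for $k>0$ it takes the conjugate seeds $(x,y)$ and $(k-1-x,y)$ supplied by Theorem~\ref{thm:7}, computes the associated balancer pairs $P_1$ and $P_2$ via Theorem~\ref{thm:1}, pushes $P_2$ once along its class using \eqref{eq:11} to obtain a pair $P_2'$, and then checks by hand that under the substitution of Remark~\ref{rem:1} the pairs $P_1$ and $P_2'$ land on solutions $(s,t)$ and $(-s,t)$ of \eqref{eq:8}, which are conjugate in Nagell's sense. Your proof instead packages the passage from balancing to balancer pairs as a single linear map $\psi(u,v)=(2v-u,u-v)$ between the solution sets of \eqref{eq:7} and \eqref{eq:8}, verifies the intertwining identity $\psi(u,-v)=\left[\begin{smallmatrix}-3&4\\2&-3\end{smallmatrix}\right]\overline{\psi(u,v)}$, and observes that this automorph is multiplication by the norm-one unit $-3+2\sqrt2=-\,(3+2\sqrt2)^{-1}$, hence preserves Nagell classes.

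What each approach buys: the paper's computation is entirely elementary and self-contained (one explicit shift along the class absorbs the unit discrepancy you identify), but it obscures \emph{why} the result holds. Your version explains the mechanism: the balancing-to-balancer correspondence is an equivariant linear map between the two Pell equations that commutes with conjugation up to a class-preserving automorph, so conjugacy is automatically transported. Two small points worth tightening in your write-up: the claim that $\psi$ carries Pell classes to Pell classes uses not only Theorem~\ref{thm:4} but also the direct check that $\psi$ intertwines multiplication by $3+2\sqrt2$ (equivalently, that \eqref{eq:10} and \eqref{eq:11} become the same matrix in the $(u,v)$-coordinates) and that $\psi(-u,-v)=-\psi(u,v)$; and your closing paragraph about the ``narrower notion of class'' is commentary rather than proof and could be trimmed.
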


\begin{proof}
The result holds for $k=0$ since there is a unique class of upper 0-gap balancing pairs which is conjugate to itself. Suppose $k>0$. Let $(x,y)$ and ${(k-1-x,y)}$ be the seeds associated to the conjugate classes of upper $k$-gap balancing pairs. Applying Theorem \ref{thm:1} to these seeds yields the associated values of $(r,\hat{r})$ as
\begin{equation*}
P_1=\left(\frac{-2x+y-1}{2},4x-y+2-2k\right)
\end{equation*}
and
\begin{equation*}P_2=\left(\frac{2x+y+1-2k}{2},-4x-y-2+2k\right)
\end{equation*}
respectively. Using \eqref{eq:11} on $P_2$ gives the pair 
\begin{equation*}
P_2'=\left(\frac{-2x+y+1}{2},-(4x+y+2-2k)\right)
\end{equation*}
which lies the same class as $P_2$. Employing the substitution given in Remark \ref{rem:1} shows $P_1$ and $P_2'$ correspond to the solutions 
\begin{equation*}
\left(4x-y+2-2k,-2x+y-1+k\right) \ \textrm{and} \left(-(4x-y+2-2k),-2x+y-1+k \right)
\end{equation*}
of \eqref{eq:8}, respectively. Since $-(2k^2-1)<0$ in \eqref{eq:8} it follows that the classes of upper $k$-gap balancer pairs which contain $P_1$ and $P'_2$, respectively, are conjugate by the theory of Pell equations. 
\end{proof}

When the classes of a conjugate pair of solutions coincide, the class is referred to as an {\it ambiguous} class. The class from Example $\ref{ex:6}$ is an example of an ambiguous class. By the proof of Theorem $\ref{thm:7}$, conjugate classes of solutions to \eqref{eq:5} are ambiguous when $x=k-1-x$ or equivalently $x=\frac{k-1}{2}$. Thus the class described in Example \ref{ex:6} is the only possible ambiguous class for \eqref{eq:5} when $k>0$. It follows that there is an even number of classes of upper $k$-gap balancing numbers whenever $k>1$ and $2k^2-1$ is not a perfect square. On the other hand, there are an odd number of classes of upper $k$-gap numbers when $k=0$ or $2k^2-1$ is a perfect square. Note that Theorem $\ref{thm:7}$ also reduces the search for seeds. For $k>0$, seeds consist of integer solutions $(x_{-1},y_{-1})$ of \eqref{eq:5} with $0\leq x_{-1}\leq \frac{k-1}{2}$ and $y_{-1}>0$ along with their conjugate seeds.

There appears to be no uniform upper bound on the number of possible classes for all $k\geq 0$. Table \ref{tab:1} lists the smallest value of $k$ with $n$ classes of upper $k$-gap balancing numbers which appear for $k\leq 10000$. Numerical evidence suggests the following conjecture.

\begin{table}\caption{Smallest $k$ with $n$ classes of upper $k$-gap balancing numbers for selected $n$.}\label{tab:1}
$$\begin{array}{r|rrrrrrrrrrrrrrr}
n & 1 & 2 & 3 & 4 & 6 & 8 & 9 & 10 & 12 & 16 & 18 & 20 & 24 & 32 & 48 \\ \hline
k & 0 & 2 & 5 & 9 & 44 & 37 & 985 & 1083 & 152 & 275 & 1034 & 3719 & 779 & 3414 & 8335
\end{array}$$
\end{table}

\begin{conjecture} \label{conj:9}
{\sl The number of classes of upper $k$-gap balancing numbers is equal to the number of positive divisors of $2k^2-1$.}
\end{conjecture}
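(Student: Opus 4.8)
The plan is to recast the counting of classes as counting classes of solutions to the Pell-like equation \eqref{eq:7}, namely $y^2 - 2z^2 = 2k^2-1$, and then to apply the classical theory of such equations (Nagell \cite{N}) which gives the number of classes of solutions in terms of the factorization of the right-hand side. By Remark \ref{rem:1} and Proposition \ref{prop:5}, each class of upper $k$-gap balancing pairs corresponds, via the substitution $z = 2x+1-k$, to a class of solutions of \eqref{eq:7}; Theorem \ref{thm:7} shows conjugate classes of \eqref{eq:5} correspond to conjugate classes $(y,z),(y,-z)$ of \eqref{eq:7}, so the correspondence is faithful at the level of classes. Thus Conjecture \ref{conj:9} is equivalent to: \emph{the number of classes of solutions of $y^2-2z^2 = N$, with $N = 2k^2-1$, equals the number of positive divisors of $N$.}

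First I would recall the structure theory for $u^2 - D v^2 = N$ where $D = 2$: solutions split into finitely many classes under the action of the unit group of $\mathbb{Z}[\sqrt 2]$ (generated by the fundamental unit $1+\sqrt 2$, of norm $-1$), and every class contains a ``fundamental'' representative whose coordinates are bounded by explicit inequalities (Nagell \cite[pp.~204--205]{N}). The count of classes is governed by the number of residues $s \pmod{2N}$ with $s^2 \equiv 2 \pmod{2N}$ that actually lift to solutions — equivalently by a count of inequivalent representations. The key number-theoretic input is that $N = 2k^2 - 1$ has a very special shape: $2N = 4k^2 - 2$, and $2\cdot k^2 \equiv 2 \cdot k^2$, so $s = 2k$ is a visible square root of $2$ modulo... more precisely, since $2k^2 \equiv 1 \pmod N$, the number $2$ is a quadratic residue mod every prime dividing $N$ and $N$ is odd, so by Hensel/CRT the congruence $s^2 \equiv 2 \pmod N$ has exactly $2^{\omega(N)}$ solutions (where $\omega$ counts distinct primes), while the divisor count $\tau(N)$ counts with multiplicity. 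The heart of the argument must therefore be that each prime power $p^a \| N$ contributes a \emph{chain} of $a$ nested classes — detected by the divisors $1, p, p^2, \dots, p^{a-1}$ of $N$ — rather than a single class; this is where one must go beyond the crude residue count and use the specific arithmetic of $2k^2-1$ (for instance that $\gcd(k, N) = 1$, and that the fundamental solution $(k, k-1)$ visible from Example \ref{ex:5} together with $(k, 0)$-type degenerate data pins down a canonical class for each divisor).

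Concretely, the steps in order would be: (1) fix the bijection between classes of \eqref{eq:5} and classes of \eqref{eq:7} and translate the seed bounds $0 \le x < k$ into bounds on $(y,z)$; (2) invoke Nagell's classification to reduce to counting fundamental solutions of \eqref{eq:7}; (3) show that for each positive divisor $d \mid N$ one can construct a solution of \eqref{eq:7} naturally associated to $d$ — plausibly by writing $N = d \cdot e$ and solving $y^2 - 2z^2 = de$ through a factorization $(y-z\sqrt2)(y+z\sqrt2)$ aligned with the factorization $N = de$, using that $\mathbb{Z}[\sqrt2]$ is a PID and $2$ splits/ramifies appropriately; (4) prove these $\tau(N)$ solutions lie in pairwise distinct classes (distinguishing them by the ideal-theoretic ``type'' $d$, invariant under multiplication by units); and (5) prove conversely that every class arises this way, i.e. every fundamental solution has such a $d$ attached, by reading off $d = \gcd(N, y - z\sqrt2)$ suitably interpreted. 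The main obstacle I anticipate is step (4)--(5): separating the contributions of $p^{a-1}$ versus $p^a$ when $N$ is not squarefree, i.e. showing the nested divisors of a prime power really do index distinct classes and that no two different divisors collapse. This is exactly the subtlety Rout and Panda \cite{kgap} flagged, and it is presumably why the statement is offered as a conjecture rather than a theorem: verifying it numerically up to $k \le 10000$ as in Table \ref{tab:1} is feasible, but a clean proof of the non-squarefree case — controlling how the class number of the order $\mathbb{Z}[\sqrt2]$ interacts with the conductor-like divisors of $2k^2-1$ — appears to require a genuinely new idea, perhaps an induction on $k$ along the recurrence $k \mapsto $ next term with $2k^2-1$ square, or a direct analysis via the theory of binary quadratic forms of discriminant $8$ and their representations of $N$.
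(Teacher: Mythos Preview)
The paper does not prove this statement: it is explicitly labeled a \emph{conjecture}, supported only by the numerical evidence in Table~\ref{tab:1} and the remark that the special case of odd $k\ge 2$ with $2k^2-1$ prime follows from Tekcan, Tayat, and \"Ozbek~\cite{T}. There is therefore no ``paper's own proof'' to compare against.

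Your proposal is likewise not a proof but an outline of a strategy, and you say as much yourself when you identify steps (4)--(5) as the ``main obstacle'' and concede that handling the non-squarefree case ``appears to require a genuinely new idea.'' That is an honest assessment, and it matches the paper's stance. Two comments on the outline itself. First, your reduction to counting classes of \eqref{eq:7} is sound: one can check directly that every integer solution $(y,z)$ of $y^2-2z^2=2k^2-1$ automatically has $z\equiv 1-k\pmod 2$ (by reducing modulo $8$), so the substitution $z=2x+1-k$ is a genuine bijection on solutions, and the action of the norm-$+1$ unit $3+2\sqrt2$ on \eqref{eq:7} matches the action of $f_k$ on \eqref{eq:5}. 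Second, your step (3)--(5) sketch via factorizations in the PID $\mathbb{Z}[\sqrt2]$ is the natural line, but note that because $\mathbb{Z}[\sqrt2]$ is a PID, the ideal $(y-z\sqrt2)$ is determined only up to units, and the unit group is generated by a norm $-1$ element; distinguishing the divisor $d$ from $N/d$ at the level of \emph{classes} (not just ideals) is precisely where the squarefree versus non-squarefree dichotomy bites, and your write-up does not actually supply a mechanism for it. In short: reasonable framing, no proof --- which is consistent with the statement's status in the paper.
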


It is known that Conjecture \ref{conj:9} is true for odd $k\geq 2$ such that $2k^2-1$ is prime as a consequence of the work of Tekcan, Tayat, and \"Ozbek \cite{T}. 

\section{Transition functions}

Panda and Rout \cite{gap} exhibited functions between the two classes of $2$-gap balancing numbers. In this section, we present general functions which map upper $k$-gap balancing and balancer pairs from one class to another class. Moreover, we establish that a transition function between two classes coincides with the transition function between the two corresponding conjugate classes in reversed order. As before, we let $y=\sqrt{8x^2+8(1-k)x+(2k-1)^2}$.

\begin{theorem}\label{thm:10} 
Let $((B_i,C_i))_{i\geq 0}$ and $((B'_i,C'_i))_{i\geq 0}$ be two classes of upper $k$-gap balancing pairs generated by \eqref{eq:10}. Then the transition function given by 

\begin{equation}\label{eq:13}
\begin{bmatrix} t(x) \\ \hat{t}(x) \end{bmatrix} = \begin{bmatrix} a & b \\ 8b & a \end{bmatrix} \begin{bmatrix} x \\ y \end{bmatrix}+ \begin{bmatrix} c \\ (4-4k)b \end{bmatrix}
\end{equation}
where
\begin{align}\label{eq:14}
a  &=  -\frac{8B_0B_0'+4(1-k)(B_0+B'_0)+(2k-1)^2-C_0C'_0-2k^2+1}{2k^2-1}   \\
& \notag \\
b  &=  \frac{2(C_0B'_0-B_0C'_0)+(1-k)(C_0-C'_0)}{2(2k^2-1)} \notag \\
& \notag \\
c  &= \frac{(1-k)\left[8B_0(B_0-B'_0)+4(1-k)(B_0-B'_0)-C_0(C_0-C'_0)\right]}{2(2k^2-1)} \notag  
\end{align}
maps $(B_i,C_i)$ to $(B'_i,C_i')$ for all $i$. 
\end{theorem}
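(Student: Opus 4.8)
\emph{Proof proposal.} The plan is to linearize the whole picture by passing to the homogeneous Pell equation \eqref{eq:7} of Remark \ref{rem:1}, where the generating map \eqref{eq:10} becomes scalar multiplication by the fundamental Pell unit in $\mathbb{Q}(\sqrt{2})$; the transition function then appears automatically as multiplication by the quotient of the two initial solutions, and the formulas \eqref{eq:14} fall out by translating back and doing bookkeeping.

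Concretely, set $z=2x+1-k$, and for the two classes write $z_i=2B_i+1-k$, $z_i'=2B_i'+1-k$. By Remark \ref{rem:1}, equation \eqref{eq:5} becomes $y^2-2z^2=N$ with $N:=2k^2-1$, so each upper $k$-gap balancing pair corresponds to an element $\beta_i:=C_i+z_i\sqrt{2}$ of $\mathbb{Z}[\sqrt{2}]$ with $N(\beta_i)=N$, and likewise $\beta_i':=C_i'+z_i'\sqrt{2}$. A one-line computation rewriting \eqref{eq:10} in the coordinates $(y,z)$ shows it is the linear map $(y,z)\mapsto(3y+4z,\,2y+3z)$, i.e. $\beta_{i+1}=\varepsilon\beta_i$ where $\varepsilon=3+2\sqrt{2}$ is the fundamental solution of $u^2-2v^2=1$; hence $\beta_i=\varepsilon^i\beta_0$ and $\beta_i'=\varepsilon^i\beta_0'$ for all $i\geq 0$.

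Since $N\neq 0$ for every integer $k$, the element $\alpha:=\beta_0'/\beta_0=\beta_0'\overline{\beta_0}/N\in\mathbb{Q}(\sqrt{2})$ is well defined and satisfies $N(\alpha)=N(\beta_0')/N(\beta_0)=1$. Writing $\alpha=p+q\sqrt{2}$ gives $p=(C_0C_0'-2z_0z_0')/N$ and $q=(C_0z_0'-z_0C_0')/N$. Because $\mathbb{Q}(\sqrt{2})$ is commutative, $\alpha\beta_i=\alpha\varepsilon^i\beta_0=\varepsilon^i\alpha\beta_0=\varepsilon^i\beta_0'=\beta_i'$ for every $i$, so multiplication by $\alpha$ sends the first class termwise onto the second. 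Translating this multiplication back to the $(x,y)$ coordinates via $x=(z-1+k)/2$ in the first output slot, one finds it is exactly an affine map of the form \eqref{eq:13} with $a=p$, $b=q/2$, and $c=(p-1)(1-k)/2$. It then remains to check these agree with \eqref{eq:14}: after substituting $z_0=2B_0+1-k$, $z_0'=2B_0'+1-k$, the expressions for $a$ and $b$ match using the identity $2(1-k)^2=(2k-1)^2-(2k^2-1)$, and the expression for $c$ matches after rewriting $p-1=(C_0C_0'-2z_0z_0'-N)/N$ with $N=C_0^2-2z_0^2$.

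There is no genuine obstacle here — the commutativity argument is the entire content, and $2k^2-1\neq 0$ guarantees $\alpha$ exists. The only step demanding care is the final identification of the clean quantities $a=p$, $b=q/2$, $c=(p-1)(1-k)/2$ with the expanded formulas in \eqref{eq:14}, where the two norm identities above are exactly what is needed to absorb the $(2k-1)^2$ and $C_0^2$ terms.
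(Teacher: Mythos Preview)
Your argument is correct and takes a genuinely different route from the paper's.  The paper proceeds computationally: it posits $t(x)=ax+by+c$, sets up the $3\times 3$ linear system forcing $t(B_i)=B_i'$ for $i=0,1,2$, solves it by Cramer's rule to obtain \eqref{eq:14}, and then runs an induction (solving a shifted $3\times 3$ system at each step and checking the same coefficients recur) to extend to all $i$; the proof for $\hat t$ is declared similar.  Your approach instead explains \emph{why} such an affine map exists at all: after the Pell substitution $z=2x+1-k$ the generator \eqref{eq:10} becomes multiplication by the unit $\varepsilon=3+2\sqrt{2}$ in $\mathbb{Z}[\sqrt{2}]$, so each class is $\{\varepsilon^i\beta_0\}$, and multiplication by the norm-one quotient $\alpha=\beta_0'/\beta_0$ carries one class to the other for every $i$ simultaneously by commutativity---no induction needed.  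The trade-off is that the paper's method is entirely elementary and self-contained, while yours imports a small amount of structure from $\mathbb{Q}(\sqrt{2})$ but in return yields the form \eqref{eq:13} (matrix $\left(\begin{smallmatrix}a&b\\8b&a\end{smallmatrix}\right)$ coming from a norm-one element) and the constants \eqref{eq:14} conceptually rather than by linear algebra, and also makes the companion result for $\hat t$ automatic rather than a parallel computation.  Your closing bookkeeping is accurate: the identity $2(1-k)^2=(2k-1)^2-(2k^2-1)$ is exactly what reconciles $a=p$ with the stated formula, and rewriting $N=C_0^2-2z_0^2$ in $p-1$ gives the stated $c$.
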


\begin{proof}
Assume $t$ has the form $t(x)=ax+by+c$ for unknown coefficients $a$, $b$, and $c$. The only possible values of the coefficients can be determined by solving the system 
\begin{equation*}
\left[\begin{array}{lll}B_0 & C_0 & 1 \\ B_1 & C_1 & 1 \\ B_2 & C_2 & 1 \end{array}\right] \begin{bmatrix} a \\ b \\ c \end{bmatrix} = \begin{bmatrix} B_0' \\ B_1' \\ B_2' \end{bmatrix}
\end{equation*}
for $a$, $b$, and $c$.
 Put $(x,y)=(B_0,C_0)$ and $(z,w)=(B_0',C_0')$. Using \eqref{eq:5} this becomes 
\begin{equation}\label{eq:15}
\left[\begin{array}{lll} x & y & 1 \\ 3x+y+1-k & 8x+3y+4-4k & 1 \\ 17x+6y+8-8k & 48x+17y+24-24k & 1 \end{array}\right] \begin{bmatrix} a \\ b \\ c \end{bmatrix} = \left[\begin{array}{l} z \\ 3z+w+1-k \\ 17z+6w+8-8k \end{array}\right].
\end{equation} 
Using Cramer's rule and the identity $y^2=8x^2+8(1-k)x+(2k-1)^2$, we obtain \eqref{eq:14}. By construction, $t(B_i)=B_i'$ for $i=0,1,2$. We now show that  $t(B_i)=B_i'$ for all $i$ using induction. Assume $t(B_i)=B_i'$ for all $0\leq i \leq k$ for some $k\geq 2$. Consider the system
\begin{equation}\label{eq:16}
\left[\begin{array}{lll}B_{k-1} & C_{k-1} & 1 \\ B_{k} & C_{k} & 1 \\ B_{k+1} & C_{k+1} & 1 \end{array}\right] \begin{bmatrix} a' \\ b' \\ c' \end{bmatrix} = \left[\begin{array}{l} B_{k-1}' \\ B_{k}' \\ B_{k+1}' \end{array}\right].
\end{equation}
By induction \eqref{eq:15} holds for $(x,y)=(B_{k-2},C_{k-2})$ and $(z,w)=(B'_{k-2},C'_{k-2})$. Again using \eqref{eq:5}, the system \eqref{eq:16} can be written as
\begin{equation*}
\left[\begin{array}{lll}  3x+y+1-k & 8x+3y+4-4k & 1 \\ 17x+6y+8-8k & 48x+17y+24-24k & 1 \\ 99x+35y+49-49k & 280x+99y+140-140k & 1 \end{array}\right] \begin{bmatrix} a' \\ b' \\ c' \end{bmatrix}=\left[\begin{array}{l}  3z+w+1-k \\ 17z+6w+8-8k \\ 99z+35w+49-49k \end{array}\right].
\end{equation*}
Using Cramer's rule and the identity $y^2=8x^2+8(1-k)x+(2k-1)^2$, we solve this last system to see $a'=a$, $b'=b$, and $c'=c$. Thus $t(B_i)=B_i'$ for all $0\leq i \leq k+1$. The proof for $\hat{t}$ is similar. 
\end{proof}

The techniques used in Theorem \ref{thm:10} can be applied to upper $k$-gap balancer pairs to obtain the following result.

\begin{theorem} 
Let $((r_i,\hat{r}_i))_{i\geq 0}$ and $((r'_i,\hat{r}'_i))_{i\geq 0}$ be upper $k$-gap balancer pairs corresponding to the two classes of upper $k$-gap balancing pairs $((B_i,C_i))_{i\geq 0}$ and $((B'_i,C'_i))_{i\geq 0}$, respectively. Let $y=\sqrt{8x^2+8kx+1}$. Then the transition function given by 

\begin{equation*}
\begin{bmatrix} T(x) \\ \widehat{T}(x) \end{bmatrix} = \begin{bmatrix} a & b \\ 8b & a \end{bmatrix} \begin{bmatrix} x \\ y \end{bmatrix}+ \begin{bmatrix} c \\ 4kb \end{bmatrix}
\end{equation*}
where
\begin{align*}
a  &=  \frac{8r_0r'_0+4k(r_0+r'_0)+2k^2-\hat{r}_0\hat{r}'_0}{2k^2-1}   \\
&  \\
b  &=  \frac{2(r_0\hat{r}'_0-\hat{r}_0r'_0)+k(\hat{r}'_0-\hat{r}_0)}{2(2k^2-1)}  \\
& \\
c  &= \frac{k\left[8r_0(r'_0-r_0)+4k(r'_0-r_0)+\hat{r}_0(\hat{r}_0-\hat{r}'_0)\right]}{2(2k^2-1)}  
\end{align*}
maps $(r_i,\hat{r}_i)$ to $(r'_i,\hat{r}_i')$ for all $i$. 
\end{theorem}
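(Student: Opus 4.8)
The plan is to mirror the proof of Theorem~\ref{thm:10} essentially verbatim, replacing the Pell-like equation \eqref{eq:5} for balancing pairs with the equation \eqref{eq:6} for balancer pairs throughout. First I would posit that the transition function has the form $T(x)=ax+by+c$ with $y=\sqrt{8x^2+8kx+1}$, and determine the three unknown coefficients $a,b,c$ by forcing $T(r_i)=r_i'$ for $i=0,1,2$. Concretely, using Theorem~\ref{thm:3} to write out $(r_1,\hat r_1)=(F_k(r_0),G_k(r_0))=(3r_0+\hat r_0+k, 8r_0+3\hat r_0+4k)$ and $(r_2,\hat r_2)=(F_k(r_1),G_k(r_1))=(17r_0+6\hat r_0+8k, 48r_0+17\hat r_0+24k)$, and likewise for the primed class, I set up the $3\times 3$ linear system
\begin{equation*}
\begin{bmatrix} r_0 & \hat r_0 & 1 \\ 3r_0+\hat r_0+k & 8r_0+3\hat r_0+4k & 1 \\ 17r_0+6\hat r_0+8k & 48r_0+17\hat r_0+24k & 1 \end{bmatrix}\begin{bmatrix} a \\ b \\ c \end{bmatrix} = \begin{bmatrix} r_0' \\ 3r_0'+\hat r_0'+k \\ 17r_0'+6\hat r_0'+8k \end{bmatrix}.
\end{equation*}
Applying Cramer's rule and simplifying the resulting determinants with the identity $\hat r_0^2 = 8r_0^2+8kr_0+1$ (and its primed analogue, noting the coefficient of $y$ in the matrix entries forces appearances of $\hat r_0^2$, $\hat r_0\hat r_0'$ to collapse) yields precisely the stated closed forms for $a$, $b$, and $c$; the denominator $2k^2-1$ appears here exactly as in Theorem~\ref{thm:10}, arising from the determinant of the coefficient matrix.

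Next I would verify that $T$ agrees with the map on all indices, not just $i=0,1,2$, by the same induction used for $t$. Assuming $T(r_i)=r_i'$ for $0\le i\le N$ with $N\ge 2$, I consider the analogous $3\times 3$ system built from indices $N-1,N,N+1$; rewriting every entry in terms of $(r_{N-2},\hat r_{N-2})$ via iterated application of $F_k,G_k$ reduces it to the shifted system, and Cramer's rule together with the Pell identity $\hat r_{N-2}^2=8r_{N-2}^2+8kr_{N-2}+1$ shows the new coefficients $a',b',c'$ equal $a,b,c$. Hence $T(r_{N+1})=r_{N+1}'$. For the $\widehat T$ component, since $\hat r_i$ is a fixed polynomial expression in $k$ and $r_i$ (namely $\hat r_i=\sqrt{8r_i^2+8kr_i+1}$, equivalently via Theorem~\ref{thm:1}(b)), the identity $T(r_i)=r_i'$ automatically forces $\widehat T(x)=8bx+ay+4kb$ to send $\hat r_i$ to $\hat r_i'$; alternatively one runs the parallel induction for $\widehat T$ directly. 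This also confirms the matrix shape $\left[\begin{smallmatrix} a & b \\ 8b & a\end{smallmatrix}\right]$, since $T$ maps solutions of \eqref{eq:6} to solutions of \eqref{eq:6}.

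I expect the main obstacle to be purely computational: carrying out the Cramer's-rule expansions cleanly enough that the $\hat r_0^2$ and $\hat r_0'^2$ terms reduce via \eqref{eq:6} to leave the advertised numerators, and in particular checking that the $+2k^2$ (rather than $-2k^2+1$ as in \eqref{eq:14}) and the sign on the $\hat r_0(\hat r_0-\hat r_0')$ term in $c$ come out correctly — these sign and constant discrepancies versus Theorem~\ref{thm:10} trace back to the fact that \eqref{eq:6} has $+8kx+1$ where \eqref{eq:5} has $+8(1-k)x+(2k-1)^2$, so one must track the substitution carefully rather than quote Theorem~\ref{thm:10} directly. One minor subtlety worth a remark: the formulas presuppose $2k^2-1\neq 0$, which is automatic for integer $k$, and when $k=0$ the single self-conjugate class makes the statement vacuous, matching the $k=0$ base case handled in Theorem~\ref{thm:8}. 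Since none of this presents a genuine difficulty beyond bookkeeping, the proof is safely left as "similar to the proof of Theorem~\ref{thm:10}," which is precisely how the paper phrases it.
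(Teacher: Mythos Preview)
Your proposal is correct and follows exactly the approach the paper intends: the paper does not write out a separate proof but simply states that the techniques of Theorem~\ref{thm:10} carry over to balancer pairs, and your outline (set up the $3\times 3$ system using $F_k,G_k$ from Theorem~\ref{thm:3}, solve by Cramer's rule with the identity $\hat r_0^2=8r_0^2+8kr_0+1$, then induct) is precisely that transplant. Your remarks on the sign and constant discrepancies arising from the passage from \eqref{eq:5} to \eqref{eq:6} are apt bookkeeping observations and do not indicate any divergence in method.
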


The next result gives a property of transition functions when the underlying classes are conjugate.

\begin{theorem}\label{thm:12}
Let $((B_i,C_i))_{i\geq 0}$ and $((B'_i,C'_i))_{i\geq 0}$ be two classes of upper $k$-gap balancing pairs. Suppose $((\overline{B}_i,\overline{C}_i))_{i\geq 0}$ and $((\overline{B}'_i,\overline{C}'_i))_{i\geq 0}$ are their conjugate classes, respectively. Then the transition functions $(B_i,C_i)\mapsto (B'_i,C'_i)$ and $(\overline{B}'_i,\overline{C}'_i)\mapsto (\overline{B}_i,\overline{C}_i)$ are the same. Moreover, the transition functions for the corresponding classes of upper $k$-gap balancer pairs are also the same.
\end{theorem}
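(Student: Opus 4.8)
The plan is to reduce the statement to the earlier structural results on conjugate classes, specifically Theorem~\ref{thm:7} and Theorem~\ref{thm:8}, combined with the explicit construction of transition functions in Theorem~\ref{thm:10}. First I would recall that the transition function $(B_i,C_i)\mapsto(B_i',C_i')$ is uniquely determined: it is the unique function of the form $x\mapsto ax+by+c$ (with $y=\sqrt{8x^2+8(1-k)x+(2k-1)^2}$) interpolating the three initial data points, as established in the proof of Theorem~\ref{thm:10}. Likewise $(\overline{B}_i',\overline{C}_i')\mapsto(\overline{B}_i,\overline{C}_i)$ is the unique such function interpolating \emph{its} three initial data points. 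So it suffices to show these two uniquely determined maps have the same coefficients $a$, $b$, $c$, which by the formulas \eqref{eq:14} amounts to checking that the relevant symmetric/antisymmetric combinations of the seeds are preserved under conjugation.

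The second step is to make the conjugation concrete. By Theorem~\ref{thm:7}, if $(x,y)$ is the seed of $((B_i,C_i))$ then $(k-1-x,y)$ is the seed of its conjugate class $((\overline{B}_i,\overline{C}_i))$; similarly if $(x',y')$ is the seed of $((B_i',C_i'))$ then $(k-1-x',y')$ is the seed of $((\overline{B}_i',\overline{C}_i'))$. The key observation is that passing from a seed $(x,y)$ to its class's initial pair $(B_0,C_0)$ is done by applying $f_k$ (i.e.\ $B_0=f_k(x)=3x+y+1-k$, $C_0=g_k(x)=8x+3y+4-4k$), and the conjugate seed $(k-1-x,y)$ produces $\overline{B}_0=f_k(k-1-x)=3(k-1-x)+y+1-k=2k-2-3x+y$ and $\overline{C}_0=8(k-1-x)+3y+4-4k=4k-4-8x+3y$. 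So I would write $\overline{B}_0$ and $\overline{C}_0$ explicitly in terms of $x$ and $y$, and similarly $\overline{B}_0'$, $\overline{C}_0'$ in terms of $x'$ and $y'$, then substitute these into the formulas for $a$, $b$, $c$ from \eqref{eq:14} as applied to the pair of conjugate classes (in the order $((\overline{B}_i',\overline{C}_i'))\to((\overline{B}_i,\overline{C}_i))$), and verify algebraically that one recovers exactly the $a$, $b$, $c$ computed from $(B_0,C_0)$ and $(B_0',C_0')$. I expect the bilinear expressions $8B_0B_0'+4(1-k)(B_0+B_0')-C_0C_0'$ and $C_0B_0'-B_0C_0'+\tfrac12(1-k)(C_0-C_0')$ to transform in a controlled way: the "$a$" numerator should come out invariant, and the "$b$" numerator should pick up sign changes that exactly cancel the swap of roles (primed $\leftrightarrow$ unprimed), using repeatedly the Pell relations $y^2-2z^2=2k^2-1$ and $y'^2-2z'^2=2k^2-1$ from Remark~\ref{rem:1}. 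For the balancer statement, I would either repeat the same computation with the formulas from the preceding theorem, or better, invoke Theorem~\ref{thm:4} and Theorem~\ref{thm:8}: the balancer transition function is the image of the balancing transition function under the correspondence $(B,C)\leftrightarrow(r,\hat r)$, which commutes with conjugation by Theorem~\ref{thm:8}, so equality of the balancing transition functions forces equality of the balancer transition functions.

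The main obstacle I anticipate is purely computational: verifying that the three coefficient formulas in \eqref{eq:14} are genuinely invariant under the substitution $(B_0,C_0,B_0',C_0')\mapsto(\overline{B}_0',\overline{C}_0',\overline{B}_0,\overline{C}_0)$ requires expanding several quadratic and bilinear forms in $x,y,x',y'$ and reducing modulo the two Pell relations, and it is easy to drop a sign or a factor of $(2k^2-1)$. A cleaner route, which I would try first to avoid the brute-force expansion, is conceptual: interpret a transition function as an element of (a coset of) the group of automorphisms of the Pell conic $y^2-2z^2=2k^2-1$ that carries one class of solutions to another; conjugation is an involution $z\mapsto -z$ of this conic, and conjugating the defining relation $t(\text{class }1)=\text{class }2$ by this involution gives $t^{\sigma}(\overline{\text{class }1})=\overline{\text{class }2}$, i.e.\ the map between the conjugate classes is $t^{\sigma}$ read in the appropriate direction. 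One then checks that the transition function \eqref{eq:13} is already $\sigma$-invariant as a map (because $\sigma$ acts trivially on the $y$-coordinate and the matrix $\begin{bmatrix} a & b \\ 8b & a\end{bmatrix}$ together with the shift is built symmetrically), so $t^{\sigma}$ going backward equals $t$ going backward, which is the claim. I would present whichever of these two arguments turns out shorter, but flag the bilinear-form identity as the lemma doing the real work either way.
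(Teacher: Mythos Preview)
Your proposal is correct and follows essentially the same approach as the paper: reduce to checking that the coefficient formulas \eqref{eq:14} are invariant under the simultaneous swap (source $\leftrightarrow$ target) and conjugation of seeds from Theorem~\ref{thm:7}, then invoke Theorem~\ref{thm:8} for the balancer statement.

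The paper streamlines your computation in one place worth noting. You push the seeds forward via $f_k$ to the initial pairs $(B_0,C_0)=(3x+y+1-k,\,8x+3y+4-4k)$ before substituting into \eqref{eq:14}, which is what makes you anticipate a messy bilinear expansion requiring repeated use of the Pell relations. The paper instead observes, from the inductive step in the proof of Theorem~\ref{thm:10}, that the formulas \eqref{eq:14} return the \emph{same} coefficients when evaluated at any pair of corresponding points in the two classes---in particular at the seeds $(B_{-1},C_{-1})$ and $(B'_{-1},C'_{-1})$ themselves. At the seed level, conjugation is simply $(x,y)\mapsto(k-1-x,y)$, so the substitution to be checked becomes: replace $(B_{-1},C_{-1},B'_{-1},C'_{-1})$ by $(k-1-B'_{-1},\,C'_{-1},\,k-1-B_{-1},\,C_{-1})$ in \eqref{eq:14} and verify $a,b,c$ are unchanged. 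This is a direct one-line check for each coefficient, with no need to expand $f_k$ or invoke the Pell relation. Your conceptual alternative via the involution on the Pell conic is morally the same observation, but the seed-level substitution already makes the verification short enough that no further abstraction is needed.
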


\begin{proof}
It follows from the proof of Theorem \ref{thm:10} that it suffices to use the seeds of two classes of upper $k$-gap balancing numbers to compute the coefficients of the transition functions between them. Observe that taking $(B_0,C_0)$ as ${(k-1-\overline{B}'_{-1},\overline{C}'_{-1})}$ and $(B'_0,C_0')$ as $(k-1-\overline{B}_{-1},\overline{C}_{-1})$ in Theorem \ref{thm:10} gives the same values of coefficients \eqref{eq:14} of the transition function \eqref{eq:13} as with taking $(B_{-1},C_{-1})$ and $(B'_{-1},C'_{-1})$, respectively. The first statement now follows from Theorem $\ref{thm:7}$. The last statement follows from a similar argument and Theorem \ref{thm:8}.
\end{proof}
 
\begin{example}
There are four classes of upper $9$-gap balancing pairs whose initial terms are $(9,19)$, $(14,31)$, $(20,47)$, and $(33,83)$, respectively. Note that $(9,19)$ and $(33,83)$ appear in conjugate classes. Similarly, $(14,31)$ and $(20,47)$ appear in conjugate classes. Four transition functions can be defined between the four classes to sort the upper $9$-gap balancing number in ascending order as shown in Table $\ref{tab:2}$. Asterisks denote non-integer values. In particular, 
\begin{equation*}
\begin{aligned}
t_1(x) &= \frac{27x+5y-16}{23}, \\ 
t_2(x) &= \frac{177x+26y-64}{161}, \\ 
t_3(x) &= \frac{27x+5y-16}{23}, \\ 
t_4(x) &= \frac{163x+9y-8}{161}, 
\end{aligned}
\
\begin{aligned}
\hat{t}_1(x) &= \frac{40x+27y-160}{23}, \\ 
\hat{t}_2(x) &= \frac{208x+177y-832}{161}, \\ 
\hat{t}_3(x) &= \frac{40x+27y-160}{23}, \\ 
\hat{t}_4(x) &= \frac{72x+163y-288}{161},
\end{aligned}
\ 
\begin{aligned}
T_1(x) &= \frac{27x+5y+18}{23}, \\ 
T_2(x) &= \frac{177x+26y+72}{161}, \\ 
T_3(x) &= \frac{27x+5y+18}{23}, \\ 
T_4(x) &= \frac{163x+9y+9}{161}. 
\end{aligned}
\end{equation*}
Here $t_1=t_3$, $\hat{t}_1=\hat{t}_3$, $T_1=T_3$, and $\widehat{T}_1=\widehat{T}_3$ (not shown above) as a consequence of Theorem \ref{thm:12}. None of the sequences or subsequences in Table \ref{tab:2} appear in The On-line Encyclopedia of Integer Sequences \cite{OEIS}.
\end{example}

\begin{table}\caption{Initial upper $9$-gap balancing numbers and related sequences.}\label{tab:2}
$$\begin{array}{r|rrrr|rrrr|rrrr}
i & 0_a & 0_b & 0_c & 0_d & 1_a & 1_b & 1_c & 1_d & 2_a & 2_b & 2_c & 2_d   \\ \hline
B & 9 & 14 & 20 & 33 & 38 & 65 & 99 & 174 & 203 & 360 & 558 & 995    \\ 
C & 19  & 31 & 47 & 83 & 97 & 173 & 269 & 481& 563 & 1007 & 1567 & 2803  \\  \hline
m & 9 & 15 & 23 & 41 & 48 & 86 & 134 & 240 & 281 & 503 & 783 & 1401  \\
r & 0 & 1 & 3 & 8  & 10 & 21 & 35 & 66  & 78  & 143 & 225 & 406   \\
\hat{r} & 1 & 9 & 17 & 33 & 39 & 71 & 111 & 199 & 233 & 417 & 649 & 1161  \\ \hline
t_1 & 14 & * & 33 & * & 65 & * & 174 & * & 360 & *& 995 & *  \\ 
t_2 & * & 20 & * & * & * & 99  & * & * & * & 558 & *  & *   \\ 
t_4 & * & * & * & 38 & * & * & * & 203  & * & * & * & 1164 \\ \hline
f_9 & 38 & 65 & 99 & 174 & 203 & 360 & 558 & 995  & 1164  & 2079 & 3233 & 5780 
 \end{array}$$
 \end{table}

\section{Recursive formulas and other results}

In this section we give recursive formulas, generating functions, and other results involving upper $k$-gap balancing numbers appearing in a particular class. This generalizes and unifies previous work on gap balancing numbers. New limits are presented in Theorem \ref{thm:15} and Corollary \ref{cor:1} that involve both upper $k$-gap balancing numbers and their corresponding upper $k$-gap Lucas-balancing numbers. Also Cassini-like formulas are given for upper $k$-gap balancing numbers and related sequences.

Dash et al.~\cite{tbal} established recursive formulas analogous to the first three for lower $k$-gap balancing numbers.

\begin{theorem}\label{thm:13}
Let $((B_i,C_i))_{i\geq 0}$ be a class of upper $k$-gap balancing pairs with associated upper $k$-gap balancer pairs $((r_i,\hat{r}_i))_{i\geq0}$, and counterbalancers $(m_i)_{i\geq0}$. Then
\begin{enumerate}
\item[(a)] $B_{i+1}= 6B_i-B_{i-1}+2-2k$;
\item[(b)] $C_{i+1} = 6C_i-C_{i-1}$;
\item[(c)] $r_{i+1}=6r_i-r_{i-1}+2k$;
\item[(d)] $\hat{r}_{i+1}=6\hat{r}_i-\hat{r}_{i-1}$;
\item[(e)] $m_{i+1}=6m_i-m_{i-1}+2$.
\end{enumerate}
\end{theorem}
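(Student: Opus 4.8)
The plan is to derive all five recurrences from a single source: the linear map $\begin{bmatrix} 3 & 1 \\ 8 & 3\end{bmatrix}$ (together with the constant shifts) that appears in \eqref{eq:10}, \eqref{eq:11}, and \eqref{eq:12}, whose characteristic polynomial is $\lambda^2-6\lambda+1$. First I would observe that by the definition of a class, $(B_{i+1},C_{i+1})=(f_k(B_i),g_k(B_i))$, i.e.\ $B_{i+1}=3B_i+C_i+1-k$ and $C_{i+1}=8B_i+3C_i+4-4k$; the inverse relation \eqref{eq:12} gives $B_{i-1}=3B_i-C_i+1-k$ and $C_{i-1}=-8B_i+3C_i+4k-4$. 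Adding the expressions for $B_{i+1}$ and $B_{i-1}$ immediately yields $B_{i+1}+B_{i-1}=6B_i+2-2k$, which is (a); likewise $C_{i+1}+C_{i-1}=6C_i$, which is (b). The same argument applied to \eqref{eq:11} and the corresponding inverse formula, using that $(r_{i+1},\hat r_{i+1})=(F_k(r_i),G_k(r_i))$ (valid by Theorem \ref{thm:4}, since the balancer pairs of a class are obtained by applying $F_k,G_k$), gives $r_{i+1}+r_{i-1}=6r_i+2k$ and $\hat r_{i+1}+\hat r_{i-1}=6\hat r_i$, which are (c) and (d).

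For (e), I would use the counterbalancer relation $m_i=B_i+r_i$ from Definition \ref{def:3} and simply add (a) and (c): $m_{i+1}+m_{i-1}=(B_{i+1}+B_{i-1})+(r_{i+1}+r_{i-1})=(6B_i+2-2k)+(6r_i+2k)=6m_i+2$, which is exactly (e). Alternatively, since $m_i=(C_i-1)/2$ by Theorem \ref{thm:1}(d), (e) follows directly from (b): $m_{i+1}+m_{i-1}=\tfrac12(C_{i+1}+C_{i-1})-1=3C_i-1=6m_i+2$. Either route avoids any case analysis on the parity of $k$.

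The only genuinely delicate point—not really an obstacle, but the one step deserving care—is justifying the inverse formulas at the level of a class: I must confirm that consecutive pairs in $((B_i,C_i))_{i\ge 0}$ really are related by \eqref{eq:10}/\eqref{eq:12}, which is immediate from the definition of a class stated before Proposition \ref{prop:5}, and that the associated balancer pairs are genuinely the $F_k,G_k$-iterates of a single seed, which is Theorem \ref{thm:4}. A secondary caveat is that for $i=0$ the quantity $B_{-1}$ (resp.\ $r_{-1}$, etc.) refers to the preceding term of the two-sided class of solutions produced by \eqref{eq:10} and \eqref{eq:12}, so the recurrences are asserted for $i\ge 1$; once that indexing is fixed, all five identities reduce to the one-line additions above. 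I would present the computation for (a) and (c) in full and remark that (b), (d) follow identically and (e) follows by summation, keeping the proof short.
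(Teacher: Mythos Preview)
Your proposal is correct and follows essentially the same approach as the paper: use \eqref{eq:10} and \eqref{eq:12} to write $B_{i\pm1}=3B_i\pm C_i+1-k$ and add, then argue similarly for (b)--(d), and derive (e) by summing (a) and (c) via $m_i=B_i+r_i$. The paper omits your alternative route to (e) through $m_i=(C_i-1)/2$ and your remarks on Theorem~\ref{thm:4} and the $i=0$ indexing, but otherwise the arguments coincide.
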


\begin{proof}
From \eqref{eq:10} and \eqref{eq:12}, it follows that ${B_{i+1} = 3B_i+C_i+1-k}$ and ${B_{i-1} = 3B_i-C_i+1-k}$. Thus $B_{i+1}+B_{i-1}=6B_i+2-2k$ which is equivalent to (a). The proofs for (b), (c), and (d) are similar. Lastly, use Definition \ref{def:3} with (a) and (c) to see
\begin{equation*}
m_{i+1}=B_{i+1}+r_{i+1}=6B_i-B_{i-1}+2-2k+6r_i-r_{i-1}+2k=6m_i-m_{i-1}+2.
\end{equation*}
\end{proof}

The recurrence relations in Theorem \ref{thm:13} can be used to determine the limits of the ratio of successive terms in a class of upper $k$-gap balancing numbers and associated sequences. The following theorem generalizes the corresponding result for balancing numbers \cite[Thm.~8.1, p.~103]{bal}. Its proof is similar and omitted.

\begin{theorem}
Let $((B_i,C_i))_{i\geq0}$ be a class of upper $k$-gap balancing pairs, $((r_i,\hat{r}_i))_{i \geq 0}$ its upper $k$-gap balancer pairs, and $(m_i)_{i\geq 0}$ the associated counterbalancers. Then
\begin{equation*}
\lim_{i\rightarrow\infty} \frac{B_{i+1}}{B_i}=\lim_{i\rightarrow\infty} \frac{C_{i+1}}{C_i}=\lim_{i\rightarrow\infty} \frac{r_{i+1}}{r_i}=\lim_{i\rightarrow\infty} \frac{\hat{r}_{i+1}}{\hat{r}_i}=\lim_{i\rightarrow\infty} \frac{m_{i+1}}{m_i}=3+\sqrt{8}.
\end{equation*}
\end{theorem}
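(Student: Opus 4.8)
The plan is to reduce everything to the characteristic equation of the common recurrence appearing in Theorem~\ref{thm:13}. Each of the five sequences $(B_i)$, $(C_i)$, $(r_i)$, $(\hat r_i)$, $(m_i)$ satisfies a linear recurrence of the form $u_{i+1} = 6u_i - u_{i-1} + c$ for a constant $c$ depending on the sequence ($c = 2-2k$, $0$, $2k$, $0$, $2$ respectively). First I would absorb the constant by passing to a shifted sequence: writing $u_i = v_i + \gamma$ where $\gamma = c/4$ is the fixed point of $t\mapsto 6t - t + c$ (valid since $6 - 1 - 1 = 4 \neq 0$), one gets the homogeneous recurrence $v_{i+1} = 6v_i - v_{i-1}$. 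Its characteristic polynomial is $t^2 - 6t + 1 = 0$ with roots $\alpha = 3 + \sqrt 8$ and $\beta = 3 - \sqrt 8$, so that $\alpha\beta = 1$ and $0 < \beta < 1 < \alpha$.

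Next I would write the closed form $v_i = A\alpha^i + B\beta^i$ for constants $A,B$ determined by the two initial values, and observe that $A \neq 0$: indeed each of the five sequences is strictly increasing and unbounded (the generating pair is an upper $k$-gap balancing pair with $B_0 \geq k \geq 0$, and Theorem~\ref{thm:2} together with the explicit form of $f_k, g_k$ in \eqref{eq:10} forces growth), so it cannot be that $v_i$ is eventually a bounded multiple of $\beta^i$. Then
\begin{equation*}
\frac{u_{i+1}}{u_i} = \frac{v_{i+1} + \gamma}{v_i + \gamma} = \frac{A\alpha^{i+1} + B\beta^{i+1} + \gamma}{A\alpha^i + B\beta^i + \gamma},
\end{equation*}
and dividing numerator and denominator by $\alpha^i$ and letting $i\to\infty$ (using $\beta^i \to 0$ and $\alpha^{-i}\to 0$) gives the limit $A\alpha / A = \alpha = 3 + \sqrt 8$. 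The same argument applies verbatim to all five sequences since only the constant $c$, hence $\gamma$, changes, and $\gamma$ washes out in the limit.

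I do not expect a genuine obstacle here; the only point requiring a little care is justifying $A\neq 0$ (equivalently, that the sequences are not eventually constant or decaying), which one can handle cleanly by noting monotonicity and unboundedness, or alternatively by computing $A$ explicitly from the initial data and checking $A>0$ using $C_0 > 0$ and the relations in Theorem~\ref{thm:1}. Since the paper explicitly says the proof is ``similar and omitted'' to that of \cite[Thm.~8.1, p.~103]{bal} for balancing numbers, the intended write-up is exactly this characteristic-root computation, and the main ``work'' is simply observing that the shift by $\gamma = c/4$ homogenizes all five recurrences simultaneously.
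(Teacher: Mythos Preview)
Your proposal is correct and is exactly the intended approach: the paper omits the proof, deferring to the standard characteristic-root argument for balancing numbers, and your reduction via Theorem~\ref{thm:13} to the homogeneous recurrence $v_{i+1}=6v_i-v_{i-1}$ is precisely that argument. One cosmetic slip: solving $\gamma = 6\gamma - \gamma + c$ gives $\gamma = -c/4$, not $c/4$, but this does not affect the limit computation.
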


The next limits involve terms from two sequences. 
\begin{theorem}\label{thm:15}
Let $((B_i,C_i))_{i\geq0}$ be a class of upper $k$-gap balancing pairs and $((r_i,\hat{r}_i))_{i \geq 0}$ its upper $k$-gap balancer pairs. Then
\begin{equation*}
\lim_{i\rightarrow\infty} \left(C_i-\sqrt{8}B_i\right)=\sqrt{2}(1-k) \ \textrm{ and } \ \lim_{i\rightarrow\infty} \left(\hat{r}_i-\sqrt{8}r_i\right)=\sqrt{2}k.
\end{equation*}
\end{theorem}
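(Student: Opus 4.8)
The plan is to work directly from the Pell-like equation \eqref{eq:5} and the recursive formula in Theorem \ref{thm:13}, rather than from closed forms. First I would note that $C_i = \sqrt{8B_i^2 + 8(1-k)B_i + (2k-1)^2}$ by Definition \ref{def:2}, so
\[
C_i - \sqrt{8}\,B_i = \frac{C_i^2 - 8B_i^2}{C_i + \sqrt{8}\,B_i} = \frac{8(1-k)B_i + (2k-1)^2}{C_i + \sqrt{8}\,B_i}.
\]
Dividing numerator and denominator by $B_i$ and letting $i \to \infty$, the numerator tends to $8(1-k)$ (the constant term $(2k-1)^2/B_i \to 0$ since $B_i \to \infty$, which follows from the recurrence in Theorem \ref{thm:13}(a) together with $B_{i+1}/B_i \to 3+\sqrt{8} > 1$), while $C_i/B_i \to \sqrt{8}$ by the preceding theorem, so the denominator tends to $2\sqrt{8}$. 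Hence the limit is $\frac{8(1-k)}{2\sqrt{8}} = \sqrt{2}(1-k)$, as claimed.

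For the second limit I would argue identically, starting from $\hat{r}_i = \sqrt{8r_i^2 + 8kr_i + 1}$ from Definition \ref{def:2}, so that
\[
\hat{r}_i - \sqrt{8}\,r_i = \frac{\hat{r}_i^2 - 8r_i^2}{\hat{r}_i + \sqrt{8}\,r_i} = \frac{8kr_i + 1}{\hat{r}_i + \sqrt{8}\,r_i}.
\]
Here one needs $r_i \to \infty$, which holds for every class except possibly the single degenerate case where $r$ is identically $0$; but if $r_0 = 0$ then from \eqref{eq:6} we get $\hat{r}_0 = 1$ and Theorem \ref{thm:13}(c),(d) force $r_1 = 2k$, $\hat{r}_1 = 6$, and thereafter $r_i \to \infty$ whenever $k>0$ (and for $k=0$ the statement reads $\lim(\hat r_i - \sqrt 8 r_i)=0$, consistent with $r_i \equiv 0$, $\hat r_i$ eventually $\to\infty$ — I would handle $k=0$ as a trivial special case or subsume it). Then dividing through by $r_i$ and using $\hat{r}_i/r_i \to \sqrt{8}$ from the previous theorem gives $\frac{8k}{2\sqrt{8}} = \sqrt{2}k$.

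The only real subtlety, and the step I expect to require the most care, is justifying that $C_i/B_i \to \sqrt{8}$ and $\hat r_i/r_i\to\sqrt 8$ are legitimately inherited from the theorem preceding this one — that theorem states the limits of consecutive ratios $B_{i+1}/B_i$ etc. are $3+\sqrt8$, but the ratios $C_i/B_i$ across the two sequences are what I actually need. This is immediate from the algebraic identity $C_i^2 - 8B_i^2 = 8(1-k)B_i + (2k-1)^2 = o(B_i^2)$, which gives $(C_i/B_i)^2 \to 8$ directly, so in fact I do not need to invoke the consecutive-ratio theorem at all; the divergence $B_i\to\infty$ is the one external fact required, and that follows from Theorem \ref{thm:13}(a) and a one-line induction (each class has $B_0 \geq k \geq 0$ and $B_1 = 3B_0 + C_0 + 1 - k > B_0$, with the recurrence then forcing monotonic growth). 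I would streamline the write-up by first recording $B_i \to \infty$ and $r_i \to \infty$ as a lemma-free remark, then presenting the two difference-of-squares computations in parallel.
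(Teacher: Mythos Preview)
Your main argument via the conjugate identity
\[
C_i-\sqrt{8}\,B_i=\frac{8(1-k)B_i+(2k-1)^2}{C_i+\sqrt{8}\,B_i}
\]
is correct and is in fact cleaner than the paper's route. The paper instead completes the square in \eqref{eq:5} to write
\[
C_i=\sqrt{8}\Bigl(B_i-\tfrac{k-1}{2}\Bigr)\sqrt{1+\tfrac{2k^2-1}{8(B_i-(k-1)/2)^2}},
\]
then sandwiches with $1\le\sqrt{1+x}\le 1+\tfrac{x}{2}$ to squeeze $C_i-\sqrt{8}\bigl(B_i-\tfrac{k-1}{2}\bigr)\to 0$. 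Because the sign of $2k^2-1$ flips at $k=0$, the paper has to treat that case separately; your difference-of-squares trick handles all $k$ uniformly. You are also right to note that $(C_i/B_i)^2\to 8$ is immediate from the same identity once $B_i\to\infty$, so there is no circular dependence on Corollary~\ref{cor:1}, which in the paper is \emph{deduced from} this theorem rather than available beforehand.

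Two small slips in your edge-case discussion, neither of which damages the argument: from \eqref{eq:11} one has $r_1=F_k(0)=3\cdot 0+\hat r_0+k=k+1$, not $2k$, and $\hat r_1=G_k(0)=3+4k$, not $6$; and for $k=0$ the balancer sequence is not identically zero (indeed $r_1=1$), so $r_i\to\infty$ holds there as well and no special handling is needed. More simply, $F_k(x)=3x+\hat r+k\ge x+1$ for $x\ge 0$ shows $(r_i)$ is strictly increasing in every class, which is all you need.
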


\begin{proof}
Completing the square in \eqref{eq:5} and rearranging, we see
\begin{equation*}
C_i=\sqrt{8}\left(B_i-\frac{k-1}{2}\right)\sqrt{1+\frac{2k^2-1}{8\left(B_i-\frac{k-1}{2}\right)^2}}
\end{equation*}
since $B_i-\frac{k-1}{2}>0$ (cf.~Example \ref{ex:2}). Applying the inequalities $1\leq \sqrt{1+x}\leq 1+\frac{x}{2}$ when $k\geq 1$ yields 
\begin{equation*}
\sqrt{8}\left(B_i-\frac{k-1}{2}\right)\leq C_i \leq \sqrt{8}\left(B_i-\frac{k-1}{2}\right)\left[1+\frac{2k^2-1}{16\left(B_i-\frac{k-1}{2}\right)^2}\right]
\end{equation*}
for sufficiently large $B_i$ since $B_i\rightarrow \infty$ as $i\rightarrow \infty$. It follows that \begin{equation*}
0\leq C_i-\sqrt{8}\left(B_i-\frac{k-1}{2}\right)\leq \frac{\sqrt{8}(2k^2-1)}{16\left(B_i-\frac{k-1}{2}\right)}
\end{equation*}
so that 
\begin{equation*}
\lim_{i\rightarrow \infty} \left(C_i-\sqrt{8}\left(B_i-\frac{k-1}{2}\right)\right)=0 
\end{equation*}
which is equivalent to the stated limit. The $k=0$ case follows from by adapting the argument to use the inequalities $1+x\leq \sqrt{1+x}\leq 1+\frac{x}{2}$. The second limit can be deduced similarly using \eqref{eq:6}.
\end{proof}

Dividing in the limits from Theorem $\ref{thm:15}$ by $B_i$ and $r_i$, respectively, gives the following result.

\begin{corollary}\label{cor:1}
Let $((B_i,C_i))_{i\geq0}$ be a class of upper $k$-gap balancing pairs and $((r_i,\hat{r}_i))_{i \geq 0}$ its upper $k$-gap balancer pairs. Then
\begin{equation*}
\lim_{i\rightarrow\infty}\frac{C_i}{B_i}=\lim_{i\rightarrow\infty}\frac{\hat{r}_i}{r_i}=\sqrt{8}.
\end{equation*}
\end{corollary}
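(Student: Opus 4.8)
The plan is to deduce Corollary~\ref{cor:1} directly from Theorem~\ref{thm:15} together with the fact, established in the previous theorem, that $B_i \to \infty$ and $r_i \to \infty$ as $i \to \infty$. The corollary is a pure limit manipulation once those two ingredients are in hand, so the work is light.

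First I would write $\dfrac{C_i}{B_i} = \dfrac{C_i - \sqrt{8}B_i}{B_i} + \sqrt{8}$. By Theorem~\ref{thm:15} the numerator $C_i - \sqrt{8}B_i$ converges to the finite constant $\sqrt{2}(1-k)$, and in particular the sequence $(C_i - \sqrt{8}B_i)_{i\ge 0}$ is bounded. Since $B_i \to \infty$ (as $B_0 \ge k \ge 0$ and the recurrence $B_{i+1} = 6B_i - B_{i-1} + 2 - 2k$ of Theorem~\ref{thm:13}(a), or the monotonicity of $f_k$, forces strict increase to infinity), the quotient $\dfrac{C_i - \sqrt{8}B_i}{B_i} \to 0$. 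Hence $\dfrac{C_i}{B_i} \to \sqrt{8}$. The argument for the balancer pairs is identical: write $\dfrac{\hat{r}_i}{r_i} = \dfrac{\hat{r}_i - \sqrt{8}r_i}{r_i} + \sqrt{8}$, use the second limit of Theorem~\ref{thm:15} to see the numerator stays bounded (it converges to $\sqrt{2}k$), and use $r_i \to \infty$ to kill the fraction. One small point: if $k = 0$ then the seed in Example~\ref{ex:4} gives $r_0 = 0$, so the ratio $\hat{r}_i/r_i$ should be read as defined only for $i$ large enough that $r_i > 0$; since $r_i$ is eventually positive and increasing this causes no difficulty, and I would note it parenthetically.

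The only thing that even slightly resembles an obstacle is making sure $B_i \to \infty$ and $r_i \to \infty$ are genuinely available rather than merely plausible; but these follow immediately from Theorem~\ref{thm:13}(a),(c) (the recurrences have dominant root $3 + \sqrt{8} > 1$ and nonnegative initial data with at least one positive term in each eventual tail) or, even more directly, from the strict monotonicity of $f_k$ on $[k,\infty)$ and $F_k$ on $[0,\infty)$ noted before \eqref{eq:12}. So in fact there is no real obstacle; the corollary is a two-line consequence of Theorem~\ref{thm:15}, which is exactly why the authors state it as a corollary. I would present it in three or four lines, one display for each of the two limits.
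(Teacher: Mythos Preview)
Your proof is correct and follows the same approach as the paper, which simply says the corollary follows by dividing the limits of Theorem~\ref{thm:15} by $B_i$ and $r_i$. You are more explicit about why $B_i,r_i\to\infty$ and about the $r_0=0$ edge case, but the underlying idea is identical.
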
 

Next we present the generating function for a class of upper $k$-gap balancing numbers and give an example. Its proof is similar to the corresponding result for cobalancing numbers  \cite[Thm.~4.1, pp.~1193--1194]{cobal} and is omitted.

\begin{theorem}\label{thm:16}
Let $((B_i,C_i))_{i\geq0}$ be a class of upper $k$-gap balancing pairs. The generating function for its upper $k$-gap balancing numbers is 
\begin{equation*}
 G(s)  = \frac{(2-2k-B_1+6B_0)s^2+(B_1-7B_0)s+B_0}{(1-s)(1-6s+s^2)}.
 \end{equation*}
\end{theorem}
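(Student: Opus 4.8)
The plan is to derive the generating function directly from the linear recurrence in Theorem~\ref{thm:13}(a), which states $B_{i+1}=6B_i-B_{i-1}+2-2k$ valid for $i\geq 1$. Set $G(s)=\sum_{i\geq 0}B_i s^i$. First I would multiply the recurrence by $s^{i+1}$ and sum over $i\geq 1$, carefully tracking the index shifts: $\sum_{i\geq 1}B_{i+1}s^{i+1}=G(s)-B_0-B_1 s$, while $6s\sum_{i\geq1}B_i s^i=6s(G(s)-B_0)$, and $s^2\sum_{i\geq1}B_{i-1}s^{i-1}=s^2 G(s)$. The inhomogeneous term contributes $(2-2k)\sum_{i\geq1}s^{i+1}=(2-2k)\frac{s^2}{1-s}$.

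Next I would assemble these into the single equation
\begin{equation*}
G(s)-B_0-B_1 s = 6s\bigl(G(s)-B_0\bigr) - s^2 G(s) + (2-2k)\frac{s^2}{1-s},
\end{equation*}
and solve for $G(s)$. Collecting the $G(s)$ terms gives $(1-6s+s^2)G(s)$ on the left, and moving everything else to the right produces
\begin{equation*}
(1-6s+s^2)G(s) = B_0 + (B_1-6B_0)s + (2-2k)\frac{s^2}{1-s}.
\end{equation*}
Multiplying through by $(1-s)$ to clear the remaining denominator, the right-hand side becomes a genuine polynomial: $\bigl(B_0+(B_1-6B_0)s\bigr)(1-s)+(2-2k)s^2 = B_0+(B_1-7B_0)s+(6B_0-B_1)s^2+(2-2k)s^2$. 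Combining the $s^2$ coefficients yields exactly $2-2k-B_1+6B_0$, which matches the stated numerator, and dividing by $(1-s)(1-6s+s^2)$ gives the claimed formula.

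The only genuine subtlety is bookkeeping: the recurrence in Theorem~\ref{thm:13}(a) holds only for $i\geq 1$ (since $B_{-1}$ is not part of the class $((B_i,C_i))_{i\geq 0}$), so one must be careful not to include an $i=0$ term when summing, which is why $B_0$ and $B_1$ appear as free initial data in the numerator rather than being further constrained. I expect the main obstacle to be purely this index-alignment step together with the two rounds of clearing denominators; there is no conceptual difficulty, so the proof is routine and, as the authors note, parallels the cobalancing case in \cite[Thm.~4.1]{cobal}, which is why it is omitted from the paper.
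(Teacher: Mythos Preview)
Your derivation is correct and is exactly the standard generating-function computation from the inhomogeneous recurrence of Theorem~\ref{thm:13}(a) that the paper has in mind when it points to the cobalancing case \cite[Thm.~4.1]{cobal} and omits the proof. The index bookkeeping and the two clearing-of-denominators steps are handled correctly, so nothing further is needed.
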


Given $k\geq 0$, let $n$ be the number of classes of upper $k$-gap balancing numbers. Theorem \ref{thm:16} can be used to determine the generating function $G_i(s)$ for each of the $n$ classes.  If the classes are labeled so that their initial upper $k$-gap balancing numbers occur in ascending order, the generating function for all upper $k$-gap balancing numbers is
\begin{equation*}
G(s)=\sum_{i=1}^n s^{i-1}G_i(s^n).
\end{equation*}

\begin{example}
There are four classes of upper $9$-gap balancing numbers whose generating functions are
\begin{equation*}
\begin{aligned}
G_1(s) &=\frac{-25s+9}{(1-s)(1-6s+s^2)}, \quad &  G_2(s) &=\frac{3s^2-33s+14}{(1-s)(1-6s+s^2)}, \\
G_3(s) &=\frac{5s^2-41s+20}{(1-s)(1-6s+s^2)}, & G_4(s) &= \frac{8s^2-57s+33}{(1-s)(1-6s+s^2)}, \end{aligned}
\end{equation*}
using Theorem \ref{thm:16}. The generating function for all upper $9$-gap balancing numbers is
\begin{equation*}
G(s)=\frac{-8s^8+3s^7+2s^6+3s^5+49s^4-13s^3-6s^2-5s-9}{(s-1)(s^8-6s^4+1)}.
\end{equation*}
\end{example}

The first formula of the last theorem is a generalization of an identity of Panda and Ray for cobalancing numbers \cite[Thm.~3.2(a), p.~1192]{cobal}. The other Cassini-like formulas (cf.~\cite[Thm.~5.3,  pp.~74--75]{Koshy}) are new. We give a different proof than Panda and Ray which avoids induction.

\begin{theorem}\label{thm:6.7}
Let $((B_i,C_i))_{i\geq0}$ be a class of upper $k$-gap balancing pairs, $((r_i,\hat{r}_i))_{i \geq 0}$ its upper $k$-gap balancer pairs, and $(m_i)_{i\geq 0}$ the associated counterbalancers. Then
\begin{enumerate}
\item[(a)] $(B_i+k-1)^2-B_{i-1}B_{i+1}=(2k-1)^2$;
\item[(b)] $C_i^2-C_{i-1}C_{i+1}=-8(2k^2-1)$;
\item[(c)] $(r_i-k)^2-r_{i-1}r_{i+1}=1$;
\item[(d)] $\hat{r}_i^2-\hat{r}_{i-1}\hat{r}_{i+1}=8(2k^2-1)$;
\item[(e)] $(m_i-1)^2-m_{i-1}m_{i+1}=-4(k^2-1)$.
\end{enumerate}
\end{theorem}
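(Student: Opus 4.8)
The plan is to derive all five Cassini-like identities uniformly from the underlying Pell-like equations together with the second-order recurrences of Theorem \ref{thm:13}, thereby avoiding induction as promised. The key observation is that each sequence in question satisfies a recurrence of the form $u_{i+1} = 6u_i - u_{i-1} + d$ for a fixed constant $d$ (with $d = 2-2k$, $0$, $2k$, $0$, $2$ in cases (a)--(e)), and that the relevant shifted sequence---$B_i+k-1$ in (a), $r_i-k$ in (c), $m_i-1$ in (e), and $C_i$, $\hat r_i$ themselves in (b), (d)---satisfies the \emph{homogeneous} recurrence $v_{i+1} = 6v_i - v_{i-1}$. Indeed, writing $v_i = u_i - c$ where $c = d/( -4)$ is the fixed point of $x \mapsto 6x - x + d$, i.e. $c$ solving $c = 6c - c + d$ hence $c = -d/4$: for (a), $c = (2k-2)/4$... this does not land on an integer shift, so instead I will verify directly from $B_{i+1}+B_{i-1} = 6B_i + 2 - 2k$ (proved in Theorem \ref{thm:13}) that $V_i := B_i + k - 1$ obeys $V_{i+1}+V_{i-1} = 6V_i$, and similarly $R_i := r_i - k$, $M_i := m_i - 1$, $C_i$, $\hat r_i$ each satisfy $W_{i+1}+W_{i-1} = 6W_i$.

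Next I would exploit the standard fact that for any sequence satisfying $W_{i+1} = 6W_i - W_{i-1}$, the quantity $W_i^2 - W_{i-1}W_{i+1}$ is \emph{independent of $i$}: substituting $W_{i+1} = 6W_i - W_{i-1}$ gives $W_i^2 - W_{i-1}(6W_i - W_{i-1}) = W_i^2 - 6W_iW_{i-1} + W_{i-1}^2$, which is visibly symmetric under $i \leftrightarrow i-1$ after re-expanding, so $W_i^2 - W_{i-1}W_{i+1} = W_{i-1}^2 - W_{i-2}W_i = \cdots$ is constant. Therefore it suffices to evaluate each expression at a single convenient index, say $i = 0$ (or $i=1$ if an index shift is cleaner), using the seed or initial data. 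For (a), I compute $(B_0+k-1)^2 - B_{-1}B_1$; here $B_{-1} = f_k^{-1}(B_0) = 3B_0 - C_0 + 1 - k$ and $B_1 = f_k(B_0) = 3B_0 + C_0 + 1 - k$ from \eqref{eq:10} and \eqref{eq:12}, so $B_{-1}B_1 = (3B_0 + 1 - k)^2 - C_0^2$, and then $(B_0 + k - 1)^2 - B_{-1}B_1 = (B_0+k-1)^2 - (3B_0+1-k)^2 + C_0^2$. Substituting $C_0^2 = 8B_0^2 + 8(1-k)B_0 + (2k-1)^2$ from \eqref{eq:5} and expanding should collapse to $(2k-1)^2$. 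Cases (b) and (d) are handled identically using the companion relations $C_{\pm 1} = 8B_0 + 3C_0 + (4-4k)(\pm\text{-adjusted})$ from \eqref{eq:10}/\eqref{eq:12} and $C_{-1}C_1 = (3C_0)^2 \cdot(\ldots)$ rewritten so that $8B_0^2 + \cdots$ reappears; cases (c), (d) mirror (a), (b) via \eqref{eq:11} and \eqref{eq:6}. For (e), I use $m_i = B_i + r_i$ (Definition \ref{def:3}) together with part (d) of Theorem \ref{thm:1}, $m_i = (C_i - 1)/2$, which reduces (e) directly to (b): $(m_i - 1)^2 - m_{i-1}m_{i+1} = \tfrac14[(C_i - 3)^2 - (C_{i-1}-1)(C_{i+1}-1)]$, and expanding and invoking both (b) and the recurrence $C_{i+1}+C_{i-1} = 6C_i$ yields $-4(k^2-1)$.

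The only real obstacle is bookkeeping: getting the signs and the constant shifts right in $B_{\pm 1}$, $C_{\pm 1}$, $r_{\pm 1}$, $\hat r_{\pm 1}$ from \eqref{eq:10}--\eqref{eq:12}, and then correctly substituting the defining Pell identity \eqref{eq:5} (resp.\ \eqref{eq:6}) to eliminate the square-root term $C_0^2$ (resp.\ $\hat r_0^2$). I anticipate that once the homogeneous recurrence $W_{i+1} + W_{i-1} = 6W_i$ is established for each shifted sequence, the constancy of $W_i^2 - W_{i-1}W_{i+1}$ is immediate, and every identity reduces to a single polynomial identity in $B_0, C_0, k$ (or $r_0, \hat r_0, k$) that is verified by direct expansion using \eqref{eq:5} or \eqref{eq:6}. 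For (e), the cleanest route is to bypass a fresh computation entirely and deduce it from (b) via $m_i = (C_i-1)/2$, which I expect to be the slickest part of the argument.
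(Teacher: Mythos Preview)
Your final computation is exactly the paper's proof---but carried out for general $i$, not just $i=0$. The paper simply writes $B_{i\pm 1} = 3B_i \pm C_i + 1 - k$ from \eqref{eq:10} and \eqref{eq:12}, expands
\[
(B_i+k-1)^2 - (3B_i-C_i+1-k)(3B_i+C_i+1-k) = C_i^2 - \bigl[8B_i^2+8(1-k)B_i+(2k-1)^2\bigr] + (2k-1)^2,
\]
and substitutes \eqref{eq:5}. No separate constancy step is needed, because the formulas for $B_{i\pm 1}$ already hold for every $i$. Your detour through a homogeneous recurrence is both unnecessary and, as stated, incorrect.

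Concretely, the claim that $V_i := B_i + k - 1$ satisfies $V_{i+1}+V_{i-1} = 6V_i$ is false: from $B_{i+1}+B_{i-1} = 6B_i + 2 - 2k$ one gets $V_{i+1}+V_{i-1} = 6B_i$, whereas $6V_i = 6B_i + 6(k-1)$; these agree only when $k=1$. (The genuine fixed-point shift is $(k-1)/2$, as you yourself computed before abandoning it.) The same failure occurs for $R_i = r_i - k$ and $M_i = m_i - 1$. There is a second problem as well: even with the correct shift, the expression in (a) is $(B_i+k-1)^2 - B_{i-1}B_{i+1}$, which is \emph{not} of the form $W_i^2 - W_{i-1}W_{i+1}$ for any single sequence $W$, since the shift applies only to the middle term. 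So the standard Cassini-constancy lemma does not touch (a), (c), or (e) directly. The repair is trivial: drop the constancy argument and run your $i=0$ computation with $i$ arbitrary---that is the paper's proof. Your reduction of (e) to (b) via $m_i = (C_i-1)/2$ is correct and matches the paper's indicated route.
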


\begin{proof}
Using \eqref{eq:10} and \eqref{eq:12}, we obtain (a) from observing that
\begin{align*}
(B_i+k-1)^2-B_{i-1}B_{i+1} &= (B_i+k-1)^2-(3B_i-C_i+1-k)(3B_i+C_i+1-k) \\
&= C_i^2-[8B_i^2+8(1-k)B_i+(2k-1)^2]+(2k-1)^2 \\
&= (2k-1)^2 
\end{align*}
where the last equality follows from \eqref{eq:5}. The other formulas can be established similarly where the identity $m=\frac{C-1}{2}$ is used for (e).  
\end{proof}
Considering the expressions on the right side of the formulas in Theorem \ref{thm:6.7} as sequences indexed by $k$, we note several connections with The On-line Encyclopedia of Integer Sequences \cite{OEIS}. The sequences \seqnum{A016754} and \seqnum{A000012} appear in (a) and (c), respectively. A constant multiple of the sequence \seqnum{A056220} arises for (b) and (d), and a constant multiple of \seqnum{A005563} occurs in (e).

\section*{Acknowledgements}
The authors thank Professor Jerry Metzger for his help in exploring several avenues of approach, providing computational expertise, and for many fruitful conversations during work on this paper.

\end{document}